\newtheorem{theorem}{Theorem}[section]
\newtheorem{lemma}[theorem]{Lemma}
\newtheorem{proposition}[theorem]{Proposition}
\newtheorem{corollary}[theorem]{Corollary}
\newtheorem{conjecture}[theorem]{Conjecture}
\theoremstyle{definition}
\newtheorem{definition}[theorem]{Definition}
\newtheorem{example}[theorem]{Example}
\theoremstyle{definition}
\newtheorem{remark}[theorem]{Remark}
\newcommand{\reg}{\rm{reg}}
\newcommand{\ls}{{\mathcal L}}
\newcommand{\PP}{{\mathbb P}}
\newcommand{\Ii}{\mathcal I}
\DeclareMathOperator{\vdim}{vdim}
\DeclareMathOperator{\edim}{edim}
\DeclareMathOperator{\ldim}{ldim}
\newcommand{\w}{\rm{w}}
\date{}
\title{On Segre's bound for fat points in $\mathbb {P}^n$}
\author{Edoardo Ballico}
\email{{\tt ballico@science.unitn.it}}
\address{Dept. of Mathematics, University of Trento, 38050 Povo (TN), Italy}
\author{Olivia Dumitrescu}
\email{{\tt  dumitrescu@math.uni-hannover.de}}
\address{Institut f\"ur Algebraische Geometrie GRK 1463, Welfengarten 1, 30167 Hannover, Germany}
\author{Elisa Postinghel}
\email{{\tt elisa.postinghel@wis.kuleuven.be}}
\address{KU Leuven, Department of Mathematics, Celestijnenlaan 200B, 3001 Heverlee,
Belgium}
\thanks{The first author was partially supported by MIUR and GNSAGA of INdAM (Italy). The second author is member of ``Simion Stoilow'' Institute of Mathematics of the Romanian Academy.
The third author is supported by the Research Foundation - Flanders (FWO)}
\keywords{Segre's bound, generalised Segre's bound, regularity index, fat points}
\subjclass[2010]{Primary: 13D40 Secondary: 14C20}
\begin{document}

\begin{abstract} For a scheme of fat points $Z$ defined by the saturated
 ideal $\mathcal{I}_Z$, the regularity index computes the Castelnuovo-Mumford
 regularity of the Cohen-Macaulay ring $R/\mathcal{I}_Z.$ 
For points in  ``general position'' we improve the bound for the regularity index 
computed by Segre for $\mathbb {P}^2$ and generalised by Catalisano, 
Trung and Valla for $\mathbb {P}^n$.
Moreover, we prove that the generalised Segre's bound conjectured
 by Fatabbi and Lorenzini holds for $n+3$ arbitrary points in $\mathbb {P}^n$.
 We propose a modification of Segre's 
conjecture for arbitrary points and we discuss some evidences. 
\end{abstract}

\maketitle

\section{Introduction}

Let $S=\{p_1,\dots ,p_s\}$ be a set of distinct points in $\mathbb {P}^n=\mathbb{P}^n_K$
 and let $\mathfrak{p}_1,\dots,\mathfrak{p}_s$ be the associated homogeneous 
prime ideals in the polynomial ring $R:=K[x_0,\dots,x_n]$, where $K$ is an 
algebraically closed field.
Given positive integers $m_1, \dots, m_s$ we denote by $Z:=\sum_{i=1}^s m_i p_i$ the
 $0$-dimensional subscheme of $\PP^n$ defined by the saturated ideal  
$\mathcal{I}_Z:=\mathfrak{p}_1^{m_1}\cap\cdots\cap\mathfrak{p}_s^{m_s}$.
We denote by $Z_{\textrm{red}}:=\sum_{i=1}^sp_i$ the support of $Z$ and 
by $\w(Z):=\sum_{i=1}^sm_i$ its weight.

Computing the value of the Hilbert function of $\mathcal{I}_Z$ at $d$ is equivalent to
 computing the dimension of the linear systems $\ls_{n,d}(m_1,\dots,m_s)$ of the 
degree-$d$ hypersurfaces of $\PP^n$ passing through each point $p_i$ with multiplicity
 at least $m_i$, for all $d\ge 0$.

The {\it regularity index}  $\reg(Z)$ of $Z$ is the smallest
positive integer $d$ such that $h^1(\PP^n,\mathcal {I}_Z(d)) =0$ or, equivalently, $h^1(\PP^n,\ls_{n,d}(m_1,\dots,m_s))=0$.
This number corresponds to the \emph{Castelnuovo-Mumford regularity} of the Cohen-Macaulay graded ring $R/\mathcal{I}_Z$.

\subsection{Segre's bound}
Let us assume, without loss of generality, that $m_1\ge\cdots\ge m_s\ge1$.

In 1961, Segre \cite{s} gave the following upper bound for the regularity index of a collection $Z$ of fat points in general position in $\PP^2$:
\begin{equation}\label{segre bound P2}
\reg(Z)\leq \max\left\{m_1+m_2-1,\left\lfloor \frac{w(Z)}{2}\right\rfloor\right\}.
\end{equation}

We must also mention that for plane points in general position, 
namely such that not three of them lie 
on a line, not six of them lie on a conic etc.,
 that was Segre's original hypothesis, the bound for the regularity index 
corresponds to the 
the famous conjecture of Segre, Harbourne, Gimigliano and Hirschowitz for 
linear systems of plane curves with fixed multiple base points.

In 1991, Catalisano \cite{c,c2} established that the bound \eqref{segre bound P2} holds
 sharp for sets of points that are three by three not collinear.
See \cite{g,g2,tv}
 for discussions about Segre's bound for fat points satisfying stronger conditions.

For arbitrary fat points in $\PP^2$, in 1969 Fulton \cite{f} gave the following upper bound:
\begin{equation}
\reg(Z)\le \w(Z)-1.
\end{equation}
It was proved to be sharp if and only if all points lie on a line by Davis and Geramita \cite{dg} in 1984.

Fatabbi in 1994 \cite{f0} proved that $\reg(Z)$ is bounded above by the maximum between the number $\left\lfloor w(Z)/2\right\rfloor$ and the maximal sum of the multiplicities of collinear points of $S$.

The above results were extended   to fat points in \emph{linearly general position} in $\PP^n$.  
Fix $n\ge2$, $s\ge2$. We say
the points $p_1,\dots ,p_s$ are in linearly general position in $\PP^n$ if for each integer $r\in \{1,\dots ,n-1\}$
we have $\sharp (S\cap L) \le r+1$,
for all $r$-dimensional linear subspaces $L\subset \mathbb {P}^n$.

Catalisano, Trung and Valla in \cite[Theorem 6]{ctv} showed  that if $Z$ is a collection of fat points in linearly general position in $\PP^n$, then
\begin{equation}\label{segre bound Pn}
\reg(Z)\leq \max\left\{m_1+m_2-1,\left\lfloor \frac{w(Z)+n-2}{n}\right\rfloor\right\}.
\end{equation}
 Moreover  they proved that the bound is sharp for 
$s\le n+2$ points in linearly general position and, for $s\ge n+3$,  when the points lie on a rational normal curve (\cite[Proposition 7]{ctv}). See also \cite{ceg,cg}.

The bound \eqref{segre bound Pn} is nowadays referred to as \emph{Segre's bound} for the regularity index of a collection of fat points $Z$ in $\PP^n$.

\subsection{Generalised Segre's bound}

For arbitrary fat points in $\PP^n$, Fatabbi and Lorenzini \cite{fl} gave the 
 following conjecture for the regularity index.

For any subset 
$L\subseteq \mathbb {P}^n$, write
$\w_L(Z)$ for the sum of all $m_p$, where $p\in S\cap L$ and $m_p$
 is the multiplicity of $Z$ at $p$. In particular $\w_{\PP^n}(Z)$ is the weight of $Z$, $\w(Z)$.

\begin{conjecture}\label{conjecture gen segre bound}
For $r=1,\dots,n$ and for any linear $r$-subspace $L$ of $\PP^n$, set
$$
T(Z,L):=\left\lfloor\frac{\w_L(Z)+r-2}{r}\right\rfloor.
$$
Then 
\begin{equation}\label{gen segre bound Pn}
\reg(Z)\le\max\{T(Z,L): L\subseteq \PP^n\}.
\end{equation}
\end{conjecture}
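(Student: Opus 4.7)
The plan is a double induction on the dimension $n$ and on the weight $\w(Z)$, driven by the Castelnuovo trace--residual exact sequence. Write $N := \max\{T(Z,L) : L \subseteq \PP^n\}$; the goal is to show $h^1(\PP^n, \Ii_Z(d)) = 0$ for every $d \ge N$. The base cases are supplied by the introduction: for $n = 2$ by Fatabbi's theorem, and for linearly general configurations in arbitrary $\PP^n$ by the Catalisano--Trung--Valla bound \eqref{segre bound Pn}. So one may assume $n \ge 3$ and that some proper linear subspace $L_0 \subsetneq \PP^n$ of dimension $r_0$ carries an abnormally heavy subset of $S$, namely with $T(Z, L_0)$ close to $N$.

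Pick a hyperplane $H \subset \PP^n$ tailored to the configuration and consider
$$0 \longrightarrow \Ii_{\mathrm{Res}_H Z}(d-1) \longrightarrow \Ii_Z(d) \longrightarrow \Ii_{\mathrm{Tr}_H Z,\, H}(d) \longrightarrow 0,$$
where $\mathrm{Tr}_H Z$ is the trace of $Z$ on $H$ (same multiplicities at points lying on $H$) and $\mathrm{Res}_H Z$ is its residual (those multiplicities reduced by one). Since $\w_L(\mathrm{Tr}_H Z) = \w_L(Z)$ for each $L \subseteq H$, the inductive hypothesis on dimension applied to $\mathrm{Tr}_H Z \subset H \cong \PP^{n-1}$ yields $h^1(H, \Ii_{\mathrm{Tr}_H Z,H}(d)) = 0$ as soon as $d \ge N$. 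Since $\w(\mathrm{Res}_H Z) < \w(Z)$, the inductive hypothesis on weight reduces the remaining task to showing that for every $r$-dimensional linear subspace $L \subseteq \PP^n$,
$$\w_L(Z) - |S \cap L \cap H| + r - 2 \le r(d-1).$$

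The crux is the choice of $H$: it must absorb enough multiplicity so that the residual bound genuinely drops by one on every subspace simultaneously. The natural candidate is a hyperplane containing $L_0$ and passing through a point of $S$ of maximal multiplicity lying outside $L_0$. The verification then splits into cases according to whether $L \subseteq H$, $L \subseteq L_0$, or $L$ meets $H$ transversely, and in each case one compares $\w_L(Z)$ with $|S \cap L \cap H|$ using that $H$ already accounts for the bulk of $L_0$. The main obstacle is the interaction of several competing subspaces that are each close to being critical for $N$: this is where the conjecture resists a clean induction, and is presumably why the paper settles for the case $s = n + 3$, whose configurations are rigid enough (either in linearly general position, or forced onto a small number of structured subspaces) to make the combinatorial case analysis close.
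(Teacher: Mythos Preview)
The statement is Conjecture~\ref{conjecture gen segre bound}, which the paper does \emph{not} prove in general: it is an open conjecture, and the paper establishes only the case $s=n+3$ (Theorem~\ref{segre for arbitrary n+3}), together with the previously known cases $n\le 3$ and $s\le n+2$ cited in the introduction. Your proposal is accordingly not a proof, and to your credit you say so in the final paragraph.

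The trace--residual induction you outline is exactly the engine behind all known partial results, including the paper's Theorem~\ref{segre for arbitrary n+3}. The genuine gap sits precisely where you place it: there is in general no hyperplane $H$ for which one can verify that $\w_L(\mathrm{Res}_H Z)\le r(d-1)+1$ for \emph{every} $r$-dimensional $L$ simultaneously. Your suggested trichotomy ($L\subseteq H$, $L\subseteq L_0$, $L$ transverse to $H$) does not close, because the dangerous subspaces are those with $\sharp(S\cap L\cap H)\le r-1$, and nothing prevents several of these from being near-critical at once. Even in the paper's proof for $s=n+3$ this step is \emph{not} routine combinatorics: Case~(b) there requires a flat degeneration to push points onto a hyperplane (Lemma~\ref{sssr2}), a Riemann--Roch argument on a canonical genus-$5$ curve cut out by three quadrics (Lemma~\ref{sssr1}), an auxiliary numerical lemma (Lemma~\ref{claim 1}), and computer verification of two explicit base configurations (Sections~\ref{app1} and~\ref{app2}). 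So the obstruction you name is not an unwritten detail; it is the content of the conjecture, and your proposal should be read as a correct diagnosis of why the problem is hard rather than as a proof.
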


The bound in \eqref{gen segre bound Pn} is referred to as \emph{generalised Segre's bound} for the regularity index of an arbitrary collection of fat points in $\PP^n$. Notice that for schemes of fat points in linearly general position, the generalised Segre's bound \eqref{gen segre bound Pn} equals precisely Segre's bound \eqref{segre bound Pn}.
The case $n=2$ was proved to be true in \cite{f0}.

Conjecture \ref{conjecture gen segre bound} was established in the case $n=3$ by Thi\^en in 2000 \cite{t} and, independently, by Fatabbi and Lorenzini in 2001 \cite{fl}. The first author also proved the case of arbitrary double points in $\PP^4$.

More recently, Benedetti, Fatabbi and Lorenzini in \cite{bfl} proved that the conjecture holds for 
arbitrary $s\le n+2$ points of $\PP^n$.

Successively, Tu and Hung \cite{th} showed that Conjecture \ref{conjecture gen segre bound} holds for $n+3$ points that are \emph{almost equimultiple}, namely when $m_i\in\{m-1,m\}$, for all $i=1,\dots,n+3$.

In Section \ref{generalised segre's bound} we prove that Conjecture \ref{conjecture gen segre bound} holds for schemes with $n+3$ arbitrary fat points of $\PP^n$. 

In Section \ref{new bound}, Theorem \ref{proposition bdp bound}, that is based on the results of Brambilla, Dumitrescu and Postinghel \cite{bdp1}, 
improves \textit{Segre's bound} \eqref{segre bound Pn} 
(and also \eqref{gen segre bound Pn}) for  fat points in   
general position in $\PP^n$.
An instance of this is the scheme of seven double points in $\PP^3.$ 
In this case, Segre's bound \eqref{segre bound Pn} is $5$, but $\ls_{3,4}(2^7)$ 
has vanishing first cohomology group, as predicted by the bound \eqref{bdp bound} given in Theorem \ref{proposition bdp bound}, that is $4$. This is a well-known example that follows from the Alexander-Hirschowitz theorem \cite{AlHi}.

In Section \ref{Conjecture} we pose a modification of the Segre conjecture  
for the regularity index of a scheme of fat points, $\reg (Z)$, and prove it holds for $n=3$.

\subsection*{Acknowledgements} 
The second and third authors would like to thank the Research Center FBK-CIRM 
 Trento for the hospitality and financial support during their one month ``Research in Pairs'', Winter 2015. This project was initiated during this program.
We are grateful to the referee for his/her useful comments.

\section{Generalised Segre's bound for $n+3$ arbitrary points}\label{generalised segre's bound}

In this section we prove that Conjecture \ref{conjecture gen segre bound} is true for an arbitrary collection $Z$ of $n+3$ fat points in $\PP^n$. 
We recall that a \emph{non-degenerate} set of points in $\PP^n$ is one whose linear span is the whole space $\PP^n$.

\begin{theorem}\label{segre for arbitrary n+3}
Let $Z:=\sum_{i=1}^{n+3} m_i p_i$ be a scheme of fat points
supported on a non-degenerate set of distinct points in $\mathbb {P}^n$. Then 
$Z$ satisfies the generalised Segre's bound, namely
$$
\reg(Z)\le \max\{T(Z,L):L\subseteq\mathbb{P}^n\}.
$$
\end{theorem}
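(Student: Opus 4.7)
The plan is to analyse the configurations of $n+3$ non-degenerate points. If they are in linearly general position, the conclusion is immediate from Catalisano, Trung and Valla \cite[Proposition 7]{ctv}, since in that case the generalised Segre bound coincides with Segre's bound \eqref{segre bound Pn}. We therefore assume the points are \emph{not} in linearly general position, so some proper linear subspace of dimension $r\le n-1$ contains at least $r+2$ of them. Set $d_0:=\max\{T(Z,L):L\subseteq\PP^n\}$; the goal is to prove $h^1(\mathcal{I}_Z(d_0))=0$.

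The natural tool is the Castelnuovo exact sequence
$$
0\longrightarrow\mathcal{I}_{\mathrm{Res}_H Z}(d_0-1)\longrightarrow\mathcal{I}_Z(d_0)\longrightarrow\mathcal{I}_{Z\cap H,H}(d_0)\longrightarrow 0
$$
for a well-chosen hyperplane $H$, combined with induction on $n$ (with base case $n\le 3$, which is known by \cite{f0,t,fl}) and a secondary induction on the weight $\w(Z)$. We would choose $H$ to contain a linear subspace $L^\star$ realising (or nearly realising) the maximum $d_0$, together with as many of the remaining $p_i$ as possible. Since $Z$ is non-degenerate, $H$ cannot contain all $n+3$ points, so $\mathrm{Res}_H Z$ has strictly smaller weight and the weight-induction applies to the left-hand term. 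On $H\cong\PP^{n-1}$, the trace $Z\cap H$ is a scheme of fat points supported on at most $n+2=(n-1)+3$ points: if the trace support has exactly $(n-1)+3$ points we invoke the inductive hypothesis in lower dimension, while if it has at most $(n-1)+2$ points we appeal to the Benedetti--Fatabbi--Lorenzini result \cite{bfl}.

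The main obstacle, and where all the real combinatorial work lies, is to verify in every configuration that $d_0\ge\max\{T(Z\cap H,M):M\subseteq H\}$ and $d_0-1\ge\max\{T(\mathrm{Res}_H Z,M):M\subseteq\PP^n\}$, so that the inductive hypotheses actually deliver the desired vanishings. One subdivides according to the dimension of $L^\star$, the number of points of $S$ lying outside $H$, and whether a dominant multiplicity forces the bound $m_1+m_2-1$. The delicate regime is when $L^\star$ has low dimension but carries a large share of the weight: choosing $H$ to contain $L^\star$ may then leave a substantial off-$H$ contribution, so that $T(\mathrm{Res}_H Z,L^\star)$ approaches $d_0$ and the naive estimate fails. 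In that regime one expects to need an \emph{ad hoc} specialisation argument — degenerating a stray point onto $H$ and concluding by semicontinuity of $h^1$, or a more refined choice of $H$ passing through several low-dimensional flats simultaneously — to push the residual bound strictly below $d_0$ and close the induction.
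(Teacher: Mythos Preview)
Your framework --- Castelnuovo sequence plus double induction on $n$ and on $\w(Z)$, with the linearly-general case handled by \cite{ctv} and the $\le n+2$-point trace handled by \cite{bfl} --- is exactly the paper's architecture. But the passage you flag as needing an ``ad hoc specialisation argument'' is not a loose end to be tidied up; it is where essentially all of the work lies, and your outline does not supply the missing idea.

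The precise obstruction is this. When no hyperplane contains $n+2$ of the points (the paper's Case~(2)), every admissible $H$ spanned by points of $S$ leaves exactly two points $p,q$ off it, and the residual $\mathrm{Res}_H Z$ still carries $m_p p + m_q q$ with full multiplicity. If the line $\langle p,q\rangle$ already evinces $d_0=m_p+m_q-1$, then $T(\mathrm{Res}_H Z,\langle p,q\rangle)=d_0$ as well, so the inequality $d_0-1\ge\max_M T(\mathrm{Res}_H Z,M)$ that you need simply \emph{fails} --- and it fails for \emph{every} choice of $H$, so no ``more refined choice of $H$'' can help. The paper escapes by (i) enlarging $Z$ to the homogeneous scheme with all multiplicities $(d_0+1)/2$, so it suffices to treat that case; (ii) in the homogeneous case, degenerating one off-point along $\langle p,q\rangle$ to its trace on $H$ and invoking semicontinuity; and (iii) checking that the degenerate configuration on $H$ still has Segre bound $d_0$, which requires iterating the construction, a projection-from-a-point lemma (Lemma~\ref{sssr2}), a numerical lemma (Lemma~\ref{claim 1}), and ultimately two explicit computer verifications serving as induction bases in $\PP^4$ and $\PP^5$ (Lemma~\ref{sssr1} and the sporadic case $n=5$, $m=2$). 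None of this machinery is anticipated by ``degenerating a stray point onto $H$ and concluding by semicontinuity'': the degeneration is specific, its analysis is delicate, and it does not terminate without the explicit low-dimensional checks.
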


In order to prove this result, we need the following lemmas. 

\begin{lemma}\label{sssr1}
Fix an integer $a\ge 3$, hyperplanes $H, M\subset \mathbb {P}^4$, $H\ne M$, and sets $S_1\subset H\cap M$, $S_2\subset H\setminus (H\cap M)$, $S_3\subset M\setminus
(H\cap M)$ such that $\sharp (S_1) =3$, $\sharp (S_2)=\sharp (S_3) = 2$, $S_1\cup S_2$ is in linearly general position in $H$ and
$S_1\cup S_3$ is in linearly general position in $M$. Set $S:= S_1\cup S_2\cup S_3$ and $Z_{a}:= \sum _{p\in S} ap$. Then $h^1(\mathcal {I}_{Z_a}(2a-1))=0$.
\end{lemma}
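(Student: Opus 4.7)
The plan is to apply the Castelnuovo restriction exact sequence first with respect to $H$ and then with respect to $M$, reducing the vanishing of $h^1$ on $\mathbb{P}^4$ to a combination of the Catalisano--Trung--Valla (CTV) bound \eqref{segre bound Pn} in $\mathbb{P}^3$ and a direct analysis of the line obstruction carried by the two multiplicity-$a$ points of $S_3$.

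First I would apply Castelnuovo with respect to $H$:
\begin{equation*}
0 \to \mathcal{I}_{W}(2a-2) \to \mathcal{I}_{Z_a}(2a-1) \to \mathcal{I}_{Z_a \cap H, H}(2a-1) \to 0,
\end{equation*}
where $W := \operatorname{Res}_H(Z_a) = (a-1)(S_1 \cup S_2) + a\, S_3$. The trace $Z_a \cap H$ is a scheme of five fat points of multiplicity $a$ in linearly general position in $H \cong \mathbb{P}^3$, and by the CTV bound its regularity is at most $\max\{2a-1, \lfloor (5a+1)/3 \rfloor\} = 2a-1$ for $a \geq 3$. Hence $h^1(H, \mathcal{I}_{Z_a \cap H, H}(2a-1)) = 0$, and it suffices to prove $h^1(\mathbb{P}^4, \mathcal{I}_W(2a-2)) = 0$.

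Next I would apply a second Castelnuovo restriction with respect to $M$ to $W$:
\begin{equation*}
0 \to \mathcal{I}_{W'}(2a-3) \to \mathcal{I}_W(2a-2) \to \mathcal{I}_{W \cap M, M}(2a-2) \to 0,
\end{equation*}
where $W' := \operatorname{Res}_M(W) = (a-2) S_1 + (a-1)(S_2 \cup S_3)$, and $W \cap M$ consists of five points in linearly general position in $M \cong \mathbb{P}^3$ with multiplicities $(a, a, a-1, a-1, a-1)$. For the residual $W'$, the generalised Segre bound equals exactly $2a-3$; its vanishing would be obtained by induction on $a$, with base case $a = 3$ giving multiplicities in $\{1, 2\}$, which is almost equimultiple and covered by Tu--Hung \cite{th}.

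The principal obstacle is the trace $W \cap M$ at degree $2a-2$: the CTV bound gives only $2a-1$, one more than the degree we can afford, because the line $L_3 := \langle S_3 \rangle$ passes through the two multiplicity-$a$ points and, since $2a > 2a-2$, forces every section to contain $L_3$ as a base component. I would extract $L_3$ as a fixed line and study the residual linear system on $M$, in which the multiplicity at each point of $S_3$ drops to $a-1$ in the directions transverse to $L_3$; the resulting scheme fits within the CTV bound and yields $h^1 = 0$. This geometric handling of the line obstruction, rather than a black-box appeal to Segre's bound in $\mathbb{P}^3$, is the subtlest step of the argument.
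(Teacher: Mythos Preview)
Your hyperplane approach breaks at the second restriction. After cutting with $H$ you need $h^1(\mathcal{I}_W(2a-2))=0$ for $W=(a-1)(S_1\cup S_2)+aS_3$, and you propose the Castelnuovo sequence with respect to $M$. But the trace $W\cap M$ in $M\cong\mathbb{P}^3$ carries both points of $S_3$ with multiplicity $a$, so the line $L_3=\langle S_3\rangle$ has $\w_{L_3}(W\cap M)=2a>(2a-2)+1$; restricting to $L_3$ already gives $h^1=1$, and since enlarging a zero-dimensional subscheme can only increase $h^1$, one gets $h^1(M,\mathcal{I}_{W\cap M,M}(2a-2))\ge 1$. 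With a positive trace $h^1$, the residual sequence yields no bound on $h^1(\mathcal{I}_W(2a-2))$. Your proposed remedy, ``extract $L_3$ as a fixed line and study the residual linear system,'' is an $h^0$ manoeuvre, not an $h^1$ one: knowing that $L_3$ lies in the base locus of $|\mathcal{I}_{W\cap M,M}(2a-2)|$ does nothing to shrink the cokernel of the evaluation map $H^0(\mathcal{O}_M(2a-2))\to H^0(\mathcal{O}_{W\cap M})$. Moreover $L_3$ has codimension two in $M$, so it is not a Cartier divisor and there is no twist $\mathcal{O}_M(-L_3)$ to pass to; making this rigorous would require blowing up $L_3$ and controlling cohomology on the blow-up, which you have not done. (A secondary loose end: the residual $W'$ is not of the form $Z_b$, so ``induction on $a$'' for the lemma as stated does not apply to it.)

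The paper's proof sidesteps the line obstruction by restricting to \emph{quadrics} through $S$ rather than hyperplanes. After checking that $\mathcal{I}_S(2)$ is globally generated, it picks smooth $Q,Q',Q''\in|\mathcal{I}_S(2)|$ whose intersection $C$ is a smooth canonical curve of genus $5$. Since each quadric contains all seven points, $\mathrm{Res}_Q(Z_a)=Z_{a-1}$ exactly, giving a clean induction on $a$: three nested residual sequences reduce the question to the vanishing of $h^1$ for a line bundle on $C$, which follows from a degree count. The base case $a=3$ is handled by an explicit computation.
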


\begin{proof}
Note that, up to a projective transformations, $S$ is uniquely determined.
The proof will be by induction on $a$. 

The case $a=3$ is an explicit computation, that can be easily performed with the help of a computer. If $e_0,\dots,e_4$ are the coordinate points of $\PP^4$, one
 can choose  $H$ to be the hyperplane spanned by the points $\{e_0,e_1,e_2,e_3\}$, $M$ to be the hyperplane spanned by the points $\{e_0,e_1,e_2,e_4\}$, $S_1=\{e_0,e_1,e_2\}$, $S_2=\{e_3, e_0+e_1+e_2+e_3\}$ and $S_3=\{e_4,e_0+e_1-e_2+e_4\}$. In this example it is easy to check that $h^1(I_{Z_3}(5))=0$ (see Section \ref{app1}).
 
Now assume $a>3$. We have $h^1(\mathcal {I}_S(2)) =0$. One can easily check this by studying the Castelnuovo residual sequence with respect to $H$.  Now we check
that $\mathcal {I}_S(2)$ is spanned. Using a residual exact sequence with the quadric hypersurface $H\cup M$ we get
$h^1(\mathcal {I}_{S\cup \{o\}}(2))=0$ for all $o\notin H\cup M$, i.e. $\mathcal {I}_S(2)$ is spanned outside
$H\cup M$. Then using a residual exact sequence with respect to $H$ (resp. $M$) we see that $\mathcal {I}_S(2)$ is spanned at each point of $H\setminus (S\cap H)$
(resp. $M\setminus (S\cap M)$). Now fix $o\in S$, say $o\in S\cap H$. Using the residual exact sequence of $H$ we get
$h^1(\mathcal {I}_{S\setminus \{o\}\cup 2o}(2)) =0$ and hence $\mathcal {I}_S(2)$ is globally generated at $o$. Since $\mathcal {I}_S(2)$ is spanned
and $S$ is finite, Bertini's theorem gives the existence of smooth quadric hypersurfaces $Q, Q, Q'\in |\mathcal {I}_S(2)|$ such that $Q\cap Q'$ is a smooth surface
and $C:= Q\cap Q'\cap Q''$ is a smooth curve. By Lefschetz' Theorem, $C$ is irreducible. By the adjunction formula $C$ is a canonically embedded smooth curve of genus $5$.
The inductive assumption gives $h^1(\mathcal {I}_{Z_{a-1}}(2a-3)) =0$ and so
$h^1(Q,\mathcal {I}_{Z_{a-1}\cap Q}(2a-3)) =0$ and $h^1(Q,\mathcal {I}_{Z_{a-1}\cap Q\cap Q'}(2a-3)) =0$. The residual exact sequence$$0 \to \mathcal {I}_{Z_{a-1}}(2a-3) \to \mathcal {I}_{Z_a}(2a-1)\to \mathcal {I}_{Z_a\cap Q}(2a-1) \to 0$$shows that it is sufficient to prove that $h^1(Q,\mathcal {I}_{Z_a\cap Q}(2a-1)) =0$. The residual exact sequence$$0 \to \mathcal {I}_{Z_{a-1}\cap Q, Q}(2a-3) \to \mathcal {I}_{Z_a\cap Q}(2a-1)\to \mathcal {I}_{Z_a\cap Q\cap Q'}(2a-1)\to 0$$shows that it is sufficient to prove that $h^1(Q\cap Q',\mathcal {I}_{Z_a\cap Q\cap Q',Q\cap Q'}(2a-1)) =0$. The residual exact sequence$$0 \to \mathcal {I}_{Z_{a-1}\cap Q\cap Q', Q\cap Q'}(2a-3) \to \mathcal {I}_{Z_a\cap Q\cap Q'}(2a-1)\to \mathcal {I}_{Z_a\cap C,C}(2a-1)\to 0$$shows that it is sufficient to prove that $h^1(C,\mathcal {I}_{Z_a\cap C,C}(2a-1)) =0$. Since $C$ is a complete intersection,
it is projectively normal. Thus it is sufficient to prove that $h^1(C,R)=0$, where $R$ is the line bundle $\mathcal {O}_C(2a-1)(-Z_a\cap C)$.
We have $h^1(C,R)=0$, because $C$ has genus $5$ and the Euler characteristic of $R$ is $\chi({R}) = 8(2a-1) -7a \ge 9$.
\end{proof}

\begin{remark}
Lemma \ref{sssr1} is false for $a=2$, even if we take $S$ to be a set of general points in $\mathbb {P}^4$ (it is an exceptional case in the list of Alexander-Hirschowitz \cite{AlHi}.
See also \cite{ale-hirsch, Po}). Also note that for $a=2$
the Segre number is $4$, because $\lfloor \frac{7\cdot 2 +2}{4}\rfloor =4$. 
\end{remark}

\begin{lemma}\label{sssr2}
Fix integers $n\ge 2$, $m>0$, $a>0$ and $t\ge a+m-1$. Fix a finite set $S\subset \mathbb {P}^n$ and $o\in S$. Fix a hyperplane $H\subset \mathbb {P}^n$
such that $o\notin H$. Let $\ell : \mathbb {P}^n\setminus \{o\} \to \mathbb {P}^{n-1}$ be the linear projection from
$o$. Set $S':= S\setminus \{o\}$ and $S_1:= \ell (S')$. Assume that $\ell |S'$ is injective. For all $p\in S$ fix an integer $m_p\ge 0$ with the restriction
that $m_o=m$ and that $m_p\le a$ for each $p\in S'$. Set $Z:= \sum _{p\in S} m_pp$. For each
integer $x\ge 0$ set $W_x:= \sum _{p\in S'} \max \{0,m_p-x\}\ell ({p})$ (hence $W_x=\emptyset$ for all $x\ge a$).
Assume $h^1(H,\mathcal {I}_{W_x}(t-x)) =0$ for all $x=0,\dots ,a$. Then $h^1(\mathcal {I}_Z(t)) =0$. 
\end{lemma}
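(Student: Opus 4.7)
The plan is to work in affine coordinates. Choose projective coordinates on $\PP^n$ so that $o=[1:0:\cdots:0]$ and $H=\{x_0=0\}$; then every form $f\in H^0(\mathcal{I}_{mo}(t))$ decomposes uniquely as
$$f = \sum_{k=m}^{t} x_0^{t-k}\,f_k(x_1,\dots,x_n), \qquad f_k\in H^0(H,\mathcal{O}_H(k)),$$
giving an identification $H^0(\mathcal{I}_{mo}(t))=\bigoplus_{k=m}^{t}H^0(\mathcal{O}_H(k))$. The goal is to show that the evaluation map $\mu:H^0(\mathcal{I}_{mo}(t))\to H^0(\mathcal{O}_{Z'}(t))=\bigoplus_{p\in S'}\mathcal{O}_{m_p p}$ is surjective, equivalently that $h^1(\mathcal{I}_Z(t))=0$.

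The core computation is to Taylor-expand $f$ at each $p\in S'$. In the affine chart containing $p$, let $\alpha_p$ denote the fiber-direction coordinate of $p$ above its projection $\bar p=\ell(p)\in H$. Expanding the factors $(z_0+\alpha_p)^{t-k}$ obtained by translating the $x_0^{t-k}$ to the origin at $p$, and collecting by $z_0$-degree, one finds that $f$ vanishes to order $m_p$ at $p$ if and only if for every $j=0,\dots,m_p-1$ the degree-$(t-j)$ section on $H$
$$G_{j,p} \,:=\, \sum_{k=m}^{t-j}\binom{t-k}{j}\,\alpha_p^{\,t-k-j}\,f_k$$
(after appropriate homogenization) vanishes to order $m_p-j$ at $\bar p$. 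Two features are critical: $\binom{t-k}{j}=0$ whenever $k>t-j$, so only $f_k$ with $k\le t-j$ appear in $G_{j,p}$; and at $k=t-j$ the coefficient equals $1$, independent of $p$.

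On the target side, the tangential filtration on each $\mathcal{O}_{m_p p}$ yields a (non-canonical) isomorphism $\mathcal{O}_{m_p p}\cong \bigoplus_{j=0}^{m_p-1}\mathcal{O}_{(m_p-j)\bar p}$, so $H^0(\mathcal{O}_{Z'}(t))\cong \bigoplus_{j=0}^{a-1} H^0(\mathcal{O}_{W_j})$. Under these identifications the map $\mu$ decomposes into blocks $\mu_{k,j}:H^0(\mathcal{O}_H(k))\to H^0(\mathcal{O}_{W_j})$ with $\mu_{k,j}=0$ for $k>t-j$, and the diagonal block $\mu_{t-j,j}$ coincides with the standard restriction map $H^0(\mathcal{O}_H(t-j))\to H^0(\mathcal{O}_{W_j})$, which is surjective exactly by the hypothesis $h^1(H,\mathcal{I}_{W_j}(t-j))=0$.

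Surjectivity of $\mu$ then follows by back-substitution on this upper-triangular block system: set $f_k=0$ for $m\le k\le t-a$, and for $j=a-1,a-2,\dots,0$ in turn use surjectivity of the diagonal block $\mu_{t-j,j}$ to solve for $f_{t-j}$, absorbing the contributions from the already-chosen $f_{t-j'}$ with $j'>j$. The triangularity $\mu_{k,j}=0$ for $k>t-j$ guarantees that later choices do not disturb earlier-settled equations, so the construction is consistent and yields $h^1(\mathcal{I}_Z(t))=0$. The main obstacle is the explicit Taylor-expansion bookkeeping that produces $G_{j,p}$ and verifies the block-triangular structure, together with the tangential-filtration identification of the codomain; the boundary case $p\in H$ (where $\alpha_p=0$ and $G_{j,p}$ collapses to $f_{t-j}$) fits directly into the same picture.
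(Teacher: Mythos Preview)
Your proof is correct, but it takes a genuinely different route from the paper's.

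The paper argues as follows. With the same choice of coordinates it uses the one-parameter family of automorphisms $h_\lambda(x_0:x_1:\cdots:x_n)=(\lambda x_0:x_1:\cdots:x_n)$; this fixes $o$ and, as $\lambda\to 0$, pushes each $p\in S'$ to its projection $\ell(p)\in H$. Thus $Z_0:=mo+\sum_{p\in S'}m_p\,\ell(p)$ is a flat limit of $\{h_\lambda(Z)\}$, and by semicontinuity it suffices to show $h^1(\mathcal{I}_{Z_0}(t))=0$. This is then done by iterating the residual exact sequence with respect to $H$: the residual of $Z_x$ is $Z_{x+1}$, the trace is $W_x$, and the hypotheses on $W_x$ together with $h^1(\mathcal{I}_{mo}(t-a))=0$ (from $t\ge a+m-1$) give the vanishing.

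Your approach avoids the degeneration entirely: you work directly with $Z$, split $H^0(\mathcal{I}_{mo}(t))=\bigoplus_{k=m}^{t}H^0(\mathcal{O}_H(k))$ by $x_0$-weight, and via an explicit Taylor expansion along the fiber direction at each $p\in S'$ exhibit the evaluation map $\mu$ as block-triangular with diagonal blocks equal to the restriction maps $H^0(\mathcal{O}_H(t-j))\to H^0(\mathcal{O}_{W_j})$; surjectivity follows by back-substitution. The two arguments are closely related under the hood: your $x_0$-weight filtration is exactly the filtration induced by the paper's $\mathbb{G}_m$-action, and your block-triangular system is the explicit linear-algebra content of the iterated residual sequences after degeneration. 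What the paper's approach buys is brevity and a clean invocation of standard tools (semicontinuity, residual sequences); what yours buys is that no limiting argument is needed and the mechanism is made completely explicit, at the cost of more careful bookkeeping. Both require $t\ge a+m-1$ at the same point: the paper to ensure $h^1(\mathcal{I}_{mo}(t-a))=0$, you to ensure that the indices $t-j$ for $j\le a-1$ all lie in the range $\{m,\dots,t\}$.
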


\begin{proof}
For each integer $x\ge 0$ set $Z_x:= mo+\sum _{p\in S'} \max \{m_p-x\}\ell ({p})$. We have $W_x= Z_x\cap H$ and $Z_x = mo$ for all $x>0$. Choose a system $x_0,\dots ,x_n$ of homogeneous coordinates such
that $H = \{x_0=0\}$ and $o = (1:0:\dots :0)$. For each $\lambda \in \mathbb {K}\setminus \{0\}$ let $h_\lambda : \mathbb {P}^n\to \mathbb {P}^n$ be the automorphism defined
by the formula $h_\lambda (x_0:x_1:\cdots :x_n) = (\lambda x_0:x_1:\cdots :x_n).$ We have $h_\lambda (o)=o$.
Since $h_\lambda$ is an automorphism of $\mathbb {P}^n$, $Z$ and $h_\lambda (Z)$ have the same Hilbert function. Since $Z_0$ is a flat limit of the family
$\{h_\lambda (Z)\}_{\lambda \ne 0}$, it is sufficient to prove that $h^1(\mathcal {I}_{Z_0}(t)) =0$. For each integer $x=0,\dots ,a$ there is a residual exact sequence$$0\to \mathcal {I}_{Z_{x+1}}(t-x-1)\to \mathcal {I}_{Z_x}(t-x) \to \mathcal {I}_{W_x,H}(t-x)\to 0.$$
We can conclude using the assumptions on $W_x$ and that $h^1(\mathcal {I}_{Z_a}(t-a)) = h^1(\mathcal {I}_{mo}(t-a)) =0$,
because $t\ge a+m-1$.
\end{proof}

\begin{lemma}\label{claim 1} Fix integer $t\ge 2$, $z>z_1\ge \cdots \ge z_t >0$ and set $\eta := z+z_1 +1$. Then $z+z_1+\cdots +z_t\le \frac{t+1}{2}\eta -t$
if $\eta$ is even and $z+z_1+\cdots +z_t\le \frac{t+1}{2}\eta -(t-1)/2$ if $\eta$ is odd.
\end{lemma}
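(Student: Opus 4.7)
The plan is purely arithmetic. The idea is to exploit the strict integer inequality $z>z_1$ to pin down an upper bound for $z_1$ in terms of $\eta$, then use the monotonicity hypothesis $z_1\ge z_2\ge\cdots\ge z_t$ to bound each remaining $z_j$ by $z_1$, and finally sum.

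The first step is to observe that $z,z_1\in\mathbb{Z}$ with $z>z_1$ forces $z\ge z_1+1$. Combined with the identity $z+z_1=\eta-1$, this yields $2z_1\le\eta-2$, i.e.\ $z_1\le (\eta-2)/2$. By integrality of $z_1$, this refines to $z_1\le\lfloor(\eta-2)/2\rfloor$, namely $(\eta-2)/2$ when $\eta$ is even and $(\eta-3)/2$ when $\eta$ is odd.

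The second step is to use $z_j\le z_1$ for $j\ge 2$ to write
$$z+z_1+z_2+\cdots+z_t=(\eta-1)+\sum_{j=2}^{t}z_j\le (\eta-1)+(t-1)z_1,$$
and then plug in the case-dependent bound on $z_1$ from the previous step. A short simplification gives exactly $\tfrac{t+1}{2}\eta-t$ in the even case, matching the claim (and showing sharpness is achieved when $z_1=(\eta-2)/2$ and $z_j=z_1$ for all $j\ge 2$). In the odd case one obtains $\tfrac{t+1}{2}\eta-\tfrac{3t-1}{2}$, which is in fact strictly stronger than the stated $\tfrac{t+1}{2}\eta-\tfrac{t-1}{2}$ for all $t\ge 1$.

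There is essentially no obstacle: the argument reduces to elementary integer arithmetic. The only nuance worth flagging is the use of integrality of $z_1$ in the odd case, which prevents $z_1$ from reaching the half-integer value $(\eta-1)/2$ and is what makes the parity split natural in the statement.
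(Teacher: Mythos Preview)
Your proof is correct and follows essentially the same strategy as the paper's: reduce to the case $z_2=\cdots=z_t=z_1$, then maximize $z_1$ subject to $z+z_1=\eta-1$ and $z>z_1$, treating the parity of $\eta$ separately. Your observation that the odd case actually yields the sharper bound $\tfrac{t+1}{2}\eta-\tfrac{3t-1}{2}$ is accurate; the paper is slightly loose there, taking $z_1=(\eta-1)/2$ as the maximizer (which together with $z=(\eta+1)/2$ does not satisfy $z+z_1=\eta-1$) rather than the true integer maximum $z_1=(\eta-3)/2$, but this still furnishes a valid upper bound and hence suffices for the stated inequality.
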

\begin{proof}
It is sufficient to prove the statement
when $z_i=z_1$ for all $i$.
We fix $\eta$. If $\eta$ is even, the left hand side of the inequality is maximal
if $z = \eta/2 $ and $z_1=\eta /2 -1$ and in this case we have $z+tz_1 = (t+1)\eta /2 -t$. If $\eta$ is odd, then the left hand side of the inequality
is maximal if $z = (\eta +1)/2$ and $z_1=(\eta -1)/2$ and in this case we have $z+tz_1= (t+1)(\eta +1)/2 -t$. 
\end{proof}

\begin{proof}[Proof of Theorem \ref{segre for arbitrary n+3}]
We will use the notation $S:=Z_{\textrm{red}}$. We will denote by $\alpha$ the Segre bound for $\reg(Z)$.

The proof is by induction on $n$ and $\w(Z)$. The case $n=1$ is 
obvious by the cohomology of line bundles on $\PP^1$. Hence
we may assume $n\geq 2$.
Now assume that $\w(Z)$ is as minimal as possible, 
i.e. $\w (Z)=n+3$, i.e. $m_i=1$ for all $i$. Since $S$ spans $\PP^n$, 
we need to prove that $h^1(\Ii _S(3)) =0$ if there
is a line $L\subset \PP^n$ with $\sharp (S\cap L) =4$ and $h^1(\Ii _S(2)) =0$ 
if there is no such a line.
Let $H\subset \PP^n$ be a hyperplane such that $\sharp (S\cap H)$ is maximal. In particular $S\cap H$ spans $H$. Since $S$ spans $H$, we have $n+1 \le \sharp (S\cap H)\le n+2$. Therefore $h^1(\Ii _{S\setminus S\cap H}(1)) =0$. If $L\subset \PP^n$ is a line with $\sharp (S\cap L) \ge 4$, then $L\subset H$. The inductive
assumption gives that $h^1(H,\Ii _{S\cap H}(3)) =0$ and that $h^1(H,\Ii _{S\cap H}(2)) =0$ if there is no line $L$ with $\sharp (S\cap L)=4$.

Hence we may use induction on $\w (Z)$.

If $S$ is in linearly general position, the statement is a particular case of \cite[Theorem 6]{ctv}. From now on we will assume that the points are not in linearly general position. In particular we will handle separately the two following cases.

\quad Case (1): $n+2$ points of $S$ are contained in a hyperplane. 

\quad Case (2): $n+1$ points of $S$ are contained in a hyperplane, but no hyperplane contains $n+2$ points.

\medskip
\quad Case (1).

After relabelling the points if necessary, we may assume that $H\cong \mathbb {P}^{n-1}$ is a hyperplane such that  $p_1,\dots,p_{n+2}\in H$ and $p_{n+3}\notin H$. Set
$W:= \textrm{Res}_H(Z)=\sum_{i=1}^{n+2}(m_i-1)p_1+m_{n+3}p_{n+3}$.  
Let $\beta$ be the Segre bound
for $\reg(W)$. 
 Consider the residual exact sequence 
 \begin{equation}\label{restriction}
0 \to \mathcal{I} _W(d-1) \to \mathcal{I} _Z(d)\to \mathcal{I} _{Z\cap H,H}(d) \to 0.
\end{equation} 
It is sufficient to prove that $\reg(W) \le \alpha -1$ and $\reg(Z\cap H)\le \alpha$.
The latter
is obvious, because by the inductive assumption on $n$ and the fact that the linear subspaces arising in the test of the bound for $Z\cap H$ are some of the ones used in the definition of $\alpha$.
By the inductive assumption on $\w(Z)$, we may assume that $W$ 
satisfies the statement, namely that $\reg(W)\le\beta$.
Therefore it suffices to prove that $\beta \le \alpha -1$. 
For any linear space $M\subseteq \mathbb {P}^n$ we have $\w_M(W) = \w_M(Z) -\sharp (S\cap M\cap H)$.

Let $L\subseteq \PP^n$ be a linear $r$-subspace evincing $\beta$, i.e. $\beta=T(W,L)$. It is a subspace spanned by the points of $W_{\mathrm{red}}$ and
in particular by points of $S$. 
Since $L\subset\PP^n$, the definitions of $\alpha$ and of $\w_L$ give $\w_L(Z)\le r\alpha+1$.
Since $\sharp (S\cap H) =\sharp (S)-1$, we get $\w_L(W) 
\le \w_L(Z)-r\le r\alpha +1 -r$ and hence $\beta \le \alpha -1$.

\medskip

\quad Case (2). 
We may assume that $H$ is a hyperplane of $\mathbb{P}^n$ with $p_1\dots,p_{n+1}\in H$ and $p_{n+2},p_{n+3}\notin H$.
Similarly to Case (1), set
$W:= \textrm{Res}_H(Z)=\sum_{i=1}^{n+1}(m_i-1)p_i+m_{n+2}p_{n+2}+m_{n+3}p_{n+3}$.  
By the inductive assumptions and the residual exact sequence \eqref{restriction} it suffices to prove that $\beta \le \alpha -1$.

Let $L\subseteq \PP^n$ be a linear $r$-subspace evincing $\beta$. It is spanned by points in the support of $W$ and
in particular it is spanned by points of $S$. We have $\beta = \lfloor \frac{\w_L(W)+r-2}{r}\rfloor$ and $\w_L(W) = \w_L(Z) -\sharp (S\cap L\cap H)$.
Since $\w_L(Z) \le r\alpha +1$, we have $\beta \le \alpha -1$, unless $\sharp (S\cap L\cap H)\le r-1$, i.e. unless $L$ contains $p_{n+2}$ and $p_{n+3}$.
Assume that this is the case. We have $\sharp (S\cap L\cap H) =r-1$.

We consider the following cases.

\quad Case (a) $L$ has dimension $r\ge2$.

\quad Case (b) $L$ has dimension $1$.

\medskip 

\quad Case (a).

 In this case $\dim(L)\ge 2$, we claim that  $\w_L(W)\le 2(r+1)$. 
If $\w_L(W)=2(r+1)$ then $\beta=3$ and, moreover, $m_{n+2}+m_{n+3}\le4$, 
as $m_{n+2}+m_{n+3}-1\le\beta$, by the definition of $\beta$. Hence we can conclude 
that $\alpha\ge \beta+1=4$. Indeed,  let  $L'\subset L\cap H$ be the linear space 
spanned by $S\cap L \setminus \{p_{n+2},p_{n+3}\}$; notice that $\dim(L')=r-2$. 
Then we have
$\alpha\ge T(Z,L')=\lfloor (\w_{L'}(Z)+r-4)/(r-2)\rfloor =
\lfloor (\w_L(W)+(r-1)-m_{n+2}-m_{n+3}+r-4)/(r-2)\rfloor\ge
\lfloor (\w_L(W)+(r-1)-4+r-4)/(r-2)\rfloor=4$.
If $\w_L(W)< 2(r+1)$, then $\beta<3$. In this case we have $m_{n+2}+m_{n+3}\le 3$, hence $\max\{m_{n+2},m_{n+3}\}\le2$. Moreover, since $r\ge2$, it must be $m'_i\ge1$ for some $p_i\in S\cap L\cap H$, $i\ne n+2,n+3$. Therefore $m_i\ge2$ and this implies that $\alpha\ge m_i+\max\{m_{n+2},m_{n+3}\}-1\ge3\ge\beta+1$ and we conclude.

We are left with proving the claim. Let $I$ be the index set parametrizing the union of points $S\cap L$. Set $m'_i=m_i-1$, for all $i\in I\setminus\{n+2,n+3\}$, and $m'_{n+2}=m_{n+2}$, $m'_{n+3}=m_{n+3}$, so that $\w_L(W)=\sum_{i\in I}m'_i$.
The definition of $\beta$ implies that 
$$\frac{\w_L(W)+r-2}{r}\ge m'_i+m'_j-1,$$ for all $i,j\in I$, $i\ne j$.
If $r=2\rho-1$ ($\rho\ge2$), then 
\begin{align*}
\rho\frac{\w_L(W)+r-2}{r}& \ge (m'_1+m'_2-1)+\cdots+(m'_{n+2}+m'_{n+3}-1)\\
& =\w_L(W)-\rho.
\end{align*} 
One can easily check that this is equivalent to $\w_L(W)\le 2(r+1).$
If instead $r=2\rho$ ($\rho\ge1$), by a similar computation one obtains 
$$
\rho\frac{\w_L(W)+r-2}{r}\ge \w_L(W)-\rho-m'_i,
$$
for all $i\in I$. We leave it to the reader to check that by taking the sum over $i\in I$ of the above expressions, one  concludes that
 $\w_L(W)\le 2(r+1)$ also in this case.

\medskip

\quad Case (b).

 Assume that $L$ is the line spanned by $p_{n+2},p_{n+3}$ and that $\sharp(S\cap L)=2$. If $\alpha$  is not attained by the line $L$, then $\alpha>m_{n+2}+m_{n+3}-1=\beta$ and we conclude.
Assume now that $\alpha$ is attained by the line spanned by $p_{n+2},p_{n+3}$, i.e. $\alpha=m_{n+2}+m_{n+3}-1$.

Without loosing
generality we may assume $m_{n+3}\ge m_{n+2}$. The definition of $\alpha$ gives $m_i\le m_{n+2}$ for all $i\le n+1$. Hence, up to a permutation
of the first $n+1$ indices we may assume that the sequence $m_i$, $1\le i\le n+3$, is non-increasing. If $m_{n+3}>m_{n+2}$, then the lines $R$
with $\w_R(Z) =\alpha +1$ are spanned by $p_{n+3}$ and the points $p_i$ with $m_i =m_{n+2}$. In this case all such lines
contain $p_{n+3}$. If $m_{n+3}=m_{n+2}$ (and hence $\alpha$ is odd), then the lines $R$ with $\w_R(Z) =\alpha+1$
are the lines spanned by two points with multiplicity $m_{n+3} =(\alpha +1)/2$.

We will split the proof of the statement in the following cases.

\quad Case (b.1): homogeneous case, i.e. $m_1=...=m_{n+3}=m$.

\quad Case (b.2): there exists a subspace $N$ of $\dim(N)\le n-2$ containing $\dim(N)+2$ points of $S$.

\quad Case (b.3): every subspace $N$, with $\dim(N)\le n-2$, contains $\dim(N)+1$ points of $S$.
 
\medskip 

\quad Case (b.1).

If  $m=1$, we have $\alpha >1$. Since $S$ spans $\mathbb {P}^n$, we have
$\sharp (S\cap N)\le \dim (N)+3$ for all linear spaces $N\subsetneq \mathbb {P}^n$. Since $n\ge 4$ we see that $\alpha =3$ if $\sharp (S\cap R) = 4$ for some line $R$ and $\alpha =2$ in all other cases. 
Notice that the first case does not occur. Finally, if $\alpha =2$ the vanishing
of $h^1(\mathcal {I}_S(\alpha ))$ is well-known to hold. 

Assume $m\ge 2$, for all $i=1,\dots,n+3$, and $\alpha=2m-1$.
 For each integer $t=1,\dots ,n-1$ let
$\gamma _t$ be the maximal integer such that $\gamma _t(\alpha +1)/2 \le t\alpha +1$. We have $\gamma _t =t+1$ for $t=1,2$, $\gamma _3=5$ and $\gamma _t \ge t+3$
for all $t\ge 4$. Since $\alpha$ is the Segre bound of $Z$, we have $\sharp (S\cap N)\le \gamma _t$ for each $t$-dimensional linear space $N\subset \mathbb {P}^n$.
In particular $\sharp (S\cap N)\le 2$ for each line $N$ and $\sharp (S\cap N)\le 3$ for each plane $N$. By assumption, $\sharp (S\cap N)\le \dim (N)+2$
for each linear space $N\subsetneq \mathbb {P}^n$. Fix any hyperplane $M$ such that $\sharp (S\cap M) = n+1$ and set $S':= S\cap M$. Call $q,q'$ the points of $S\setminus (S\cap M)$
and $D$ the line spanned by $q$ and $q'$. Set $\{o\}:= D\cap M$ and $S'':= S'\cup \{o\}$. Since $\sharp (S\cap N)\le 2$ for each line $N$, we have $o\notin S'$. Set
$$Z':= \frac{\alpha +1}{2}o +\frac{\alpha +1}{2}q + \sum _{P\in S'} \frac{\alpha +1}{2}q.$$ Choose a system $x_0,\dots ,x_n$ of homogeneous coordinates such
that $M = \{x_0=0\}$ and $q = (1:0:\dots :0)$. For each $\lambda \in \mathbb {K}\setminus \{0\}$ let $h_\lambda : \mathbb {P}^n\to \mathbb {P}^n$ be the automorphism defined
by the formula $h_\lambda (x_0:x_1:\cdots :x_n) = (\lambda x_0:x_1:\cdots :x_n)$. We have $h_\lambda (q)=q$ and $h_\lambda ({p}) =p$ for each $p\in M$.
Since $h_\lambda$ is an automorphism, $Z$ and $h_\lambda (Z)$ have the same regularity. Since $Z'$ is a flat limit of the family $\{h_\lambda (Z)\}_{\lambda \ne 0}$,
the semicontinuity theorem for cohomology gives $\mathrm{reg}(Z')\ge \mathrm{reg}(Z)$. Hence is sufficient to
prove that $h^1(\mathcal {I}_{Z'}(\alpha ))=0$. Therefore we are done if $\alpha$ is Segre's bound for $Z'$. By the shape of the function $\gamma _t$ or by
the proof of Case 2 we see that it is sufficient to prove that $Z'\cap M$ has index of regularity $\alpha$. By the inductive assumption on $n$ and the shape
of the function $\gamma _t$ we see that it is sufficient to prove that $\sharp (S''\cap R) \le 2$ for each line $R \subset M$ and $\sharp (S''\cap N) \le 3$ for each plane $N\subset M$.
Assume the existence of a line $R \subset M$ such that $\sharp (S''\cap N) \ge 3$. Since $\sharp (S'\cap R)\le \gamma _1=2$, we have
$o\in R$ and $\sharp (S'\cap R)=2$. Hence $R\cup D$ spans a plane $A$ with $\sharp (S\cap A) =4>\gamma _2$, contradicting the assumption that
$\alpha$ is the Segre bound of $Z$. 

Now assume the existence of a plane $N$ such that $\sharp (S''\cap N) \ge 4$. Since $\gamma _2=3$ we see
that $o\in N$ and $\sharp (S'\cap N)=3$. Therefore $R\cup N$ spans a 3-dimensional linear subspace $A$ with $\sharp (S\cap A) =5$.
We repeat the construction taking a hyperplane $M(1)$ containing $A$ and spanned by points of $S$. Set $S'(1):= S\cap M(1)$,
$\{q(1),q(1)'\}:= S\setminus S'(1)$. Call $D(1)$ the line spanned by $\{q(1),q(1)'\}$ and set $o(1):= D(1)\cap M(1)$. We repeat the construction with $M(1)$
instead of $M$ and get $h^1(\mathcal {I}_Z(\alpha ))=0$, unless there is a 3-dimensional linear space $A(1)\subset \mathbb {P}^n$ with $\sharp (S\cap A(1)) = 5$ and a plane $N(1)\subset M(1)$ such that the linear span of $N(1)$ and $D(1)$ is $A(1)$ and $\sharp (S\cap N(1)) =3$.
 Set $b:= \sharp (S\cap A\cap A(1))$. By construction we have $b\le 3$. Let $u$ be the the dimension of the linear span $E$ of $A\cup A(1)$.
We have $\sharp (S\cap E) \ge 10-b$ and $u \le 7-b$. We get $E =\mathbb {P}^n$ and hence $n\le 7$. We also get that $S =S\cap (A\cup A(1))$, $S\cap A\cap A(1)$ is linearly independent
(it may be empty) and that $A\cap A(1)$ is spanned by $S\cap A\cap A(1)$.  For each $n$, any two sets $S', S'(1)$ with the properties just described are projectively equivalent.

\medskip

\quad Case (b.1.1).

 Assume $n=4$. If $\alpha =3$, then this case is excluded, because $n=4$, $s=7$, and $m_i=2$ for all $i$ has Segre number $4$. If $\alpha >3$, then we use the case $m_i = (\alpha +1)/2$ of Lemma \ref{sssr1}.

\medskip

\quad Case (b.1.2).

 Assume $n>4$. 
Consider first the case $n=5$, $m=2$. We want to prove that $h^1(\mathcal{I}_Z(3))=0$ for any union $Z$ of eight double points of $\PP^5$ such that $\sharp(S\cap N)\le \gamma_t$ for any $t$-dimensional subspace, with $\gamma_1=2$, $\gamma_2=3$, $\gamma_3=5$ and $\gamma_4=6$, the last inequality being sharp for a hyperplane $H$. It is enough to exhibit an example for which for every $t$ there is a subspace $N$ with $\sharp(S\cap N)=\gamma_t$ that satisfies the claim. One can show by computer 
that the statement holds for the set $S$ 
given by $e_0,\dots,e_5$, the coordinate points, and $e_0+e_1+e_2+e_3$, $e_0+e_1+e_5+e_6$ (see Section \ref{app2}).

Now assume $(n,m)\ne (5,2)$. With this restriction we know the vanishing
for the pair $(n-1,m)$ by the inductive assumption on $n$. Fix $o\in S\cap M$ and take a general hyperplane $N\subset \mathbb {P}^n$. Let $\ell : \mathbb {P}^n\setminus \{o\}\to N$
the linear projection from $o$. Set $S':= S\setminus \{o\}$ and $S_1:= \ell (S')$. For all integers $x\ge 0$ set 
$$W_x:= \left( \sum _{p\in S_1} \max \left\{\frac{\alpha +1}{2}-x,0\right\}p\right)\cap N.$$ Since $S_1$ is the configuration of $n+2$ points 
of $N$ corresponding to $\alpha$, we have $h^1(N,\mathcal {I}_{W_0}(\alpha ))=0$. Using Segre's bound in $N =\mathbb {P}^{n-1}$ we also get
the other vanishing needed in order to apply Lemma \ref{sssr2} with $m=(\alpha +1)/2$ and $t=\alpha$.

\medskip

\quad Case (b.2).

 Let $N\subset \mathbb {P}^n$ be a minimal subspace containing exactly $\dim (N)+2$ points of $S$. In this step we assume $y:= \dim (N) \le n-2$.
In this case we consider the residual sequence with respect to a hyperplane $H'$ containing $N$, spanned by points
of $S$ and containing $p_{n+3}$. We get $h^1(\mathcal {I}_Z(\alpha ))=0$, unless the two points, say $o_1$ and $o_2$, of $S\setminus (S\cap H')$ span a line $R$
with $S\cap R\cap H' =\emptyset$ and $m_{o_1}+m_{o_2} = \alpha +1$. 
This can not occur  if $m_{n+3}>m_{n+2}$. Indeed in this case each line $R$ with $T(R,Z) =\alpha +1$ contains $p_{n+3}$. Therefore
we may choose $H'$ to be a hyperplane containing $N\cup \{p_{n+3}\}$ and obtain $p_{n+3}\in S\cap R\cap H'$ for every line $R$.

Assume $m_{n+3} = m_{n+2}=\frac{\alpha+1}{2}$. The scheme $\widetilde{Z}:= \sum _{p\in S} \frac{\alpha +1}{2} p$ satisfies  the vanishing $h^1(\mathcal {I}_{\widetilde{Z}}(\alpha ))=0$ by Case (b1).
We conclude by noticing that $Z\subset \widetilde{Z}$ implies $\reg(Z)\le\reg(\widetilde{Z})$.

\medskip

\quad Case (b.3).

 In this case any $n$ points of $S$ are linearly independent.
 If $m_{n+2} =m_{n+3}=\frac{\alpha +1}{2}$, we set $\widetilde{Z}:= \sum _{p\in S} \frac{\alpha +1}{2} p$ and conclude by Case (b1) as above.

Assume $m_{n+3} > m_{n+2}$. Recall that since $m_{n+3}+m_{n+2}=\alpha +1$ and no triplet of  points of $S$ is supported on a line, each line $R$ with
$\w_R(Z)=\alpha +1$ is spanned by $p_{n+3}$ and a point with multiplicity $m_{n+2}$. Let $H'$ be the hyperplane spanned by
$p_{n+4},\dots,p_{4}$. Set $W':= \textrm{Res}_{H'}(Z)$. By the inductive assumption it is sufficient to prove that Segre's bound
for $W'$ is at most $\alpha -1$, i.e. that $\w_A({W'}) \le t(\alpha -1)+1$ for all integer $t=1,\dots ,n$ and all $t$-dimensional linear subspaces of $A \subseteq \mathbb {P}^n$.
It is sufficient to test the linear subspaces $A$ spanned by $S\cap A$.

If $t=n$, we prove the statement by noticing that 
 $\w(W')=\w(Z)-\sharp(S\cap H')\le n(\alpha-1)+1$. 

Now assume $t\le n-2$. By assumption $\sharp (S\cap A) = t+1$. We have $\sharp (S\cap A\cap H') =t+1 -\sharp (\{p_1,p_2,p_3\}\cap A)$ and
hence $\sharp (S\cap A\cap H')<t$ only if
at least two among $p_1,p_2,p_3$ are contained in $A$. First assume $\sharp (S\cap A\cap H') =t-1$. In this case we
have $\w_A(Z) \le m_{n+3}+\cdots + m_{n+5-t}+m_2+m_1$ and $\w_A(W')  \le m_{n+3}+\cdots + m_{n+5-t}+m_2+m_1 -(t-1)$. 
Lemma \ref{claim 1} with $\eta=\alpha$
gives $\w_A(Z) \le \frac{t+1}{2}\alpha -\frac{t-1}{2}$ and hence $\w_A({W'}) \le \frac{t+1}{2}\alpha -3\frac{t+1}{2}\le t(\alpha -1)+1$.
Now assume $\{p_1,p_2,p_3\}\subset A$ and hence $t\ge 2$. We get $\w_A({W'}) \le \frac{t+1}{2}\alpha -2t \le t(\alpha -1)+1$.

Now assume $t=n-1$ and $A\neq H'$. In the case $\sharp (S\cap A) =n$ we conclude as in the case $t\le n-2$.
Assume $\sharp (S\cap A) =n+1$. 
Obviously $\sharp(S\cap A\cap H')\ge n-1$, 
 hence $\w_A({W'}) \le \w_A(Z) -n+1$.
Moreover $S\cap (A\setminus  H')\subseteq\{p_1,p_2,p_3\}$ hence $\w_A(Z)\le m_{n+3}+\cdots +m_5+m_3+m_2$.
By Lemma \ref{claim 1} with $\eta =\alpha$, we have $\w_A(Z) \le \frac{n+1}{2}\alpha -\frac{n-1}{2}$. Therefore we conclude that  $\w_A({W'}) \le \frac{n+1}{2}\alpha -3\frac{n-1}{2} \le (n-1)(\alpha -1)+1$. 

\end{proof}

\section{The bound for the regularity index from \cite{bdp1}}\label{new bound}

Brambilla, Dumitrescu and Postinghel \cite{bdp1} gave a bound on the sum of the multiplicities for a linear system interpolating points in general position in $\PP^n$ to be only linearly obstructed. 

The notion of \emph{general position} adopted in this paper is given by the
 following condition. 
Let $(\mathbb{P}^n)^{[s]}$ be the Hilbert scheme parametrizing $s$  points of 
$\mathbb{P}^n$, and let $\mathcal{S}$ denote the point in $(\mathbb{P}^n)^{[s]}$ 
corresponding to a set $S$ of $s$ distinct points in $\mathbb{P}^n$.
The set $S\subset\PP^n$ is in general position if $\mathcal{S}$ belongs to a Zariski open subset of
$(\mathbb{P}^n)^{[s]}$.

In particular, a set of points $S\subset\PP^n$ in general position
is in linearly general position, 
it does not contain more than $n+3$ points that lie on a rational normal
curve of degree $n$, etc.

Let $\ls=\ls_{n,d}(m_1,\ldots,m_s)$ be the linear system of 
hypersurfaces of degree $d$ in $\PP^n$ passing through a collection of $s$ points in 
 general position with multiplicities at least
$m_1,\ldots,m_s$.

\begin{definition}
The  {\em (affine) virtual dimension} of $\ls$ is defined by
$$\vdim(\ls)=\binom{n+d}{n}-\sum_{i=1}^s\binom{n+m_i-1}{n}$$
and the {\em expected dimension} of $\ls$ is $\edim(\ls)=\max(\vdim(\ls),0)$.
If $\dim(\ls)=\edim(\ls)$, or equivalently then $\ls$ is said to be \emph{non-special}.
\end{definition}
\begin{remark}	\label{virtual}
Notice that $\vdim(\ls)=h^0(\PP^n,\ls)-h^1(\PP^n,\ls)$, hence $\ls$ is non-special if and only if $h^1(\PP^n,\ls)=0$.
\end{remark}

\begin{definition}[{\cite[Definition 3.2]{bdp1}}]\label{new-definition}
For any integer 
$-1\le r\le  s-1$ and for any multi-index $I(r)=\{i_1,\ldots,i_{r+1}\}\subseteq\{1,\ldots,s\}$, define the integer 
\begin{equation}\label{mult k}
k_{I(r)}:=\max(m_{i_1}+\cdots+m_{i_{r+1}}-rd,0).
\end{equation}

The {\em (affine) linear virtual dimension} of $\ls$   is the number
\begin{equation}\label{linvirtdim}
\sum_{r=-1}^{s-1}\sum_{I(r)\subseteq \{1,\ldots,s\}} (-1)^{r+1}\binom{n+k_{I(r)}-r-1}{n}.
\end{equation}
 where we set $I(-1)=\emptyset$. 
The {\em (affine) linear expected dimension} of $\ls$,  denoted by $\ldim(\ls)$, 
is defined as follows: it is $0$ if $\ls$ is contained in a linear system whose 
linear virtual dimension is non-positive, otherwise it is the maximum between 
the linear virtual dimension of $\ls$ and $0$.
If $\dim(\ls)=\ldim(\ls)$, then $\ls$ 
is said to be  {\em only linearly obstructed}. 
\end{definition}

Asking whether the dimension of a
 given linear system equals its linear expected dimension can be thought 
as a refinement of the classical question of asking whether the dimension 
equals the expected dimension.

\begin{theorem}{\cite[Theorem 5.3]{bdp1}}\label{theorem bdp}
Set $s\ge n+3$, $n\ge1$, $d\ge2$ and $d\ge m_1\ge\dots\ge m_s\ge1$.
Let $\ls=\ls_{n,d}(m_1,\dots,m_s)$ be a linear system with points in  general position.
Let $s(d)$ be the number of multiplicities equal to $d$, namely the smallest integer such that $m_{s(d)+1}<d$.
Assume that
\begin{equation}\label{b bound}
\sum_{i=1}^sm_i\le nd+b,
\end{equation}
where $b=b(\ls):=\min\{n-s(d),s-n-2\}$. 
Then $\ls$ is only linearly obstructed, i.e. $\dim(\ls)=\ldim(\ls)$. 
\end{theorem}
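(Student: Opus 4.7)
The plan is to prove Theorem \ref{theorem bdp} by induction on the pair $(d, \w(Z))$, ordered lexicographically, combining a specialisation/restriction argument with Cremona transformations based at a chosen $(n+1)$-tuple of points. The base case is $d=2$: the Alexander--Hirschowitz classification of special linear systems of double points covers it directly, and one checks that the bound \eqref{b bound} is strong enough to rule out each of the AH exceptional configurations.

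For the inductive step I would specialise a carefully chosen subset $\{p_{t+1},\dots,p_s\}$ of the points onto a general hyperplane $H\subset\PP^n$ and exploit the Castelnuovo restriction-residue sequence
\begin{equation*}
0\to\mathcal{I}_{\mathrm{Res}_H(Z)}(d-1)\to\mathcal{I}_Z(d)\to\mathcal{I}_{Z\cap H,H}(d)\to 0.
\end{equation*}
The trace on $H$ is a linear system in $\PP^{n-1}$ of degree $d$, and the residue lives in $\PP^n$ with degree $d-1$; the cut-off index $t$ is chosen so that both satisfy the hypothesis of the theorem, in particular the bound \eqref{b bound} with the appropriately updated $b$. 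Applying the inductive hypothesis to the two outer terms gives $h^1=0$ for $\mathcal{I}_Z(d)$, and a parallel dimension count transfers ``only linearly obstructed'' from the pieces to the middle. When no admissible $t$ exists, typically because the largest multiplicities are too concentrated, I would first apply a Cremona transformation of $\PP^n$ centred at the $n+1$ points of highest multiplicity; this preserves both the actual dimension and the linear virtual dimension of $\ls$, and lowers $d$ (and the total multiplicity) sufficiently to re-enter the regime where specialisation applies or to reach the base case.

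The main obstacle is the combinatorial bookkeeping of the linear virtual dimension \eqref{linvirtdim} under both operations. That formula is an alternating sum over multi-indices $I(r)$ of binomial contributions built from $k_{I(r)}=\max(m_{i_1}+\cdots+m_{i_{r+1}}-rd,0)$, while the restriction-residue sequence only yields an Euler characteristic identity between three terms. One must argue that each ``linear obstruction'' in $\PP^n$ — corresponding to a multi-index $I(r)$ with $k_{I(r)}>0$ — descends either to a linear obstruction in $H$ or to one in $\mathrm{Res}_H(Z)$, and that the Cremona-invariance of $\ldim(\ls)$ matches the Weyl-type transformation of the $k_{I(r)}$. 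This is where the precise value $b=\min\{n-s(d),s-n-2\}$ becomes crucial: it is calibrated so that obstructions coming from non-linear cycles (for instance the rational normal curve of degree $n$ through $n+3$ points, which is the first non-linear locus one meets in general position) lie strictly above the allowed weight, and therefore do not enter the inclusion-exclusion formula.
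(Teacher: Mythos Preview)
The paper does not contain a proof of this statement. Theorem~\ref{theorem bdp} is quoted from \cite[Theorem~5.3]{bdp1} and is used as a black box; the present paper only derives Corollary~\ref{bound for non-special} and Theorem~\ref{proposition bdp bound} from it. There is therefore no ``paper's own proof'' to compare your proposal against.

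As to the proposal itself: the strategy you outline (induction via hyperplane specialisation and the restriction--residue sequence, combined with standard Cremona transformations to lower the degree, and the Alexander--Hirschowitz theorem as a base) is indeed in the spirit of the arguments in \cite{bdp1}, so it is a reasonable sketch of how that reference proceeds. But it remains a sketch: the paragraph you flag as ``the main obstacle'' --- tracking the alternating sum \eqref{linvirtdim} through both the exact sequence and the Cremona action, and showing that no non-linear obstruction appears under the bound \eqref{b bound} --- is precisely where all the work lies, and you have not actually carried it out. In particular, the claim that obstructions partition cleanly between trace and residue, and that $\ldim$ is Cremona-invariant in the required sense, each requires a separate argument that is not supplied here. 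If you intend to include a self-contained proof rather than a citation, those steps would need to be written out in full.
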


As an easy consequence of the above result, one obtains the following.

\begin{corollary}\label{bound for non-special}
If the same hypotheses as in Theorem  \ref{theorem bdp} are satisfied and moreover 
\begin{equation}\label{no linear obstructions}
d\ge m_1+m_2-1,
\end{equation}
then $\ls$ is non-special, namely $\dim(\ls)=\edim(\ls)$.
\end{corollary}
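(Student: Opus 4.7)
The plan is to combine Theorem \ref{theorem bdp} with a direct calculation on the linear virtual dimension. The former already yields $\dim(\ls) = \ldim(\ls)$ under the hypotheses, so the task reduces to showing that the extra assumption $d \ge m_1 + m_2 - 1$ forces $\ldim(\ls) = \edim(\ls)$, or equivalently that the linear virtual dimension computed by \eqref{linvirtdim} coincides with $\vdim(\ls)$.

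First I would isolate the $r = -1$ and $r = 0$ contributions in the sum \eqref{linvirtdim}. For $r=-1$ the only multi-index is $I(-1)=\emptyset$, $k_{I(-1)} = d$, and the term contributes $\binom{n+d}{n}$; for $r=0$ we run through singletons with $k_{I(0)}=m_i$, producing $-\sum_{i=1}^s \binom{n+m_i-1}{n}$. Together these two pieces are precisely $\vdim(\ls)$. Hence it suffices to show that every summand with $r \ge 1$ vanishes.

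For the key estimate I would rewrite $d \ge m_1 + m_2 - 1$ as $m_1 + m_2 \le d + 1$ and use $m_i \le m_1 \le d$ for all $i$ (which holds by the standing hypotheses of Theorem \ref{theorem bdp}). Then, for any multi-index $I(r)=\{i_1,\dots,i_{r+1}\}$ with $r \ge 1$,
\[
m_{i_1} + \cdots + m_{i_{r+1}} \le m_1 + m_2 + (r-1)d \le rd + 1,
\]
so $k_{I(r)} \in \{0,1\}$. Consequently $n + k_{I(r)} - r - 1 \le n - r \le n - 1 < n$, and the binomial $\binom{n + k_{I(r)} - r - 1}{n}$ is zero (using the standard convention that $\binom{a}{n} = 0$ for any integer $a < n$, including negative values of $a$). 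Thus every summand in \eqref{linvirtdim} with $r \ge 1$ vanishes.

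Combining the two steps, the linear virtual dimension of $\ls$ equals $\vdim(\ls)$, so by the definition of $\ldim$ we have $\ldim(\ls) = \max(\vdim(\ls),0) = \edim(\ls)$. Together with $\dim(\ls) = \ldim(\ls)$ from Theorem \ref{theorem bdp}, this gives $\dim(\ls) = \edim(\ls)$, i.e. non-speciality of $\ls$. The only mildly delicate point is the binomial convention for negative upper argument implicit in \eqref{linvirtdim}; once that is fixed, the argument is essentially a one-line consequence of Theorem \ref{theorem bdp} and the definition of $\ldim$.
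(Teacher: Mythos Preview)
Your proof is correct and follows essentially the same approach as the paper's: both argue that under the hypothesis $d\ge m_1+m_2-1$ the linear virtual dimension collapses to $\vdim(\ls)$, so that Theorem~\ref{theorem bdp} gives $\dim(\ls)=\ldim(\ls)=\edim(\ls)$. The paper phrases the key step geometrically (the line through $p_1,p_2$, and hence every linear cycle, lies in the base locus at most simply and so creates no speciality), whereas you unpack Definition~\ref{new-definition} directly and show $k_{I(r)}\le 1$ for $r\ge 1$, which makes each binomial $\binom{n+k_{I(r)}-r-1}{n}$ vanish; these are the same observation in different language.
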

\begin{proof}
By Theorem \ref{theorem bdp}, $\ls$ is only linear obstructed. In fact $\dim(\ls)=\ldim(\ls)=\vdim(\ls)$,  hence $h^1(\ls)=0$ by Remark \ref{virtual}. Indeed \eqref{no linear obstructions} 
implies that the line spanned by the first two points is contained at most simply in the base locus of $\ls$ hence it does not create speciality.  
Because $m_1\ge\dots\ge m_s$, the same is true for all other lines and for all higher dimensional cycles spanned by subsets of $Z_{\textrm{red}}$. 
\end{proof}

We can rephrase the above results and give an upper bound for the regularity
 index of a collection of fat points in general position in $\PP^n$. For a linear system $\ls$, let us define the positive integer
\begin{equation}\label{integer c}
c=c(\ls):=\min\{n,s-n-2\}.\end{equation}

\begin{theorem}\label{proposition bdp bound}
Let $Z$ be a collection of fat points in  general position in $\PP^n$ with multiplicities $m_1\ge\cdots\ge m_s\ge1$. Then
\begin{equation}\label{bdp bound}
\reg(Z)\le \max\left\{m_1+m_2-1,\left\lceil \frac{\w(Z)-c}{n}\right\rceil \right\}.
\end{equation}
\end{theorem}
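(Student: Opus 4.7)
The plan is to derive the bound as a direct consequence of Corollary \ref{bound for non-special}. Set
\[
D := \max\left\{m_1+m_2-1,\ \left\lceil \tfrac{\w(Z)-c}{n}\right\rceil\right\}.
\]
Since $h^1(\PP^n, \mathcal{I}_Z(d))$ is non-increasing in $d$ and vanishes once it drops to zero, it suffices to prove $h^1(\mathcal{I}_Z(D)) = 0$, which is exactly the assertion $\reg(Z) \le D$.

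My first step is to verify the hypotheses of Corollary \ref{bound for non-special} applied to $\ls_{n,D}(m_1,\dots,m_s)$. The lower bound $D \ge m_1+m_2-1$ holds by construction, and since $m_2 \ge 1$ it yields $D \ge m_1 \ge m_2 \ge \cdots \ge m_s \ge 1$. The key numerical condition to check is $\w(Z) \le nD + b$, where $b = \min\{n-s(D),\ s-n-2\}$ and $s(D)$ is the number of indices $i$ with $m_i = D$.

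The bulk of the proof deals with the generic situation $D > m_1$, which covers every instance with $m_2 \ge 2$. In this regime $s(D) = 0$, hence $b = \min\{n,\ s-n-2\} = c$, and the defining inequality $D \ge \lceil (\w(Z)-c)/n\rceil$ rearranges to $\w(Z) \le nD + c = nD + b$. Corollary \ref{bound for non-special} then applies and produces the non-speciality of $\ls_{n,D}(m_1,\dots,m_s)$, which by Remark \ref{virtual} is precisely $h^1(\mathcal{I}_Z(D)) = 0$.

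The only remaining possibility is $D = m_1$, which forces $m_2 = 1$, so that $Z$ consists of a single fat point of multiplicity $m_1$ together with $s-1$ simple points in general position. Here $s(D) \ge 1$, the number $b$ may be strictly less than $c$, and the weight inequality $\w(Z) \le nD + b$ does not follow automatically from the hypothesis on $D$. I expect this boundary case to be the principal technical step: the plan is to exploit the explicit form $\w(Z) = m_1 + s - 1$ and to split according to whether $c = n$ or $c = s-n-2$, invoking Corollary \ref{bound for non-special} directly when the numerology permits, and otherwise reducing to the main case at degree $D+1$ by a Castelnuovo residual sequence with respect to a general hyperplane through $p_1$, exploiting that the residual scheme has $m_1$ decreased by one and thus lands in the generic regime covered above.
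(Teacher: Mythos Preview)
Your argument in the generic case $D>m_1$ is correct and is exactly the paper's proof: the paper's entire argument is the single sentence that for $d$ at least the right-hand side of \eqref{bdp bound} the system $\ls_{n,d}(m_1,\dots,m_s)$ is non-special by Corollary~\ref{bound for non-special}. You have merely made explicit the check that $s(D)=0$ and hence $b=c$ in this regime, which the paper leaves implicit.

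You are right to flag the boundary case $D=m_1$ (forcing $m_2=1$) as not covered by a direct appeal to Corollary~\ref{bound for non-special}, and the paper's one-line proof does not address it. However, your proposed residual-sequence workaround cannot succeed, because the bound \eqref{bdp bound} as literally stated actually \emph{fails} in this boundary case. Take $n=2$, $s=6$, multiplicities $(3,1,1,1,1,1)$ at general points of $\PP^2$: then $\w(Z)=8$, $c=\min\{2,2\}=2$, and the bound reads $\max\{3,\lceil 6/2\rceil\}=3$. But $\ls_{2,3}(3,1^5)$ consists of plane cubics with a triple point at $p_1$, i.e.\ cones over binary cubics; projecting from $p_1$ identifies this with degree-$3$ forms on $\PP^1$ vanishing at five general points, so $h^0=0$ while $\vdim=-1$, giving $h^1=1$ and $\reg(Z)=4>3$. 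The same phenomenon occurs for $n=3$, $s=8$, multiplicities $(2,1,\dots,1)$, where the bound gives $2$ but $\reg(Z)=3$. So the statement (and the paper's proof) should be read in the regime $s(D)=0$, equivalently $D>m_1$; there your argument and the paper's coincide, and no residual-sequence patch is needed.
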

\begin{proof}
If $d$ is bigger or equals the number on the right hand side of \eqref{bdp bound}, then the linear system $\ls_{n,d}(m_1,\dots,m_s)$ is non-special, by Corollary \ref{bound for non-special}.
\end{proof}

\begin{remark}
When 
$m_1+m_2-1\ge\left\lceil (\w(Z)-c)/{n}\right\rceil$
the bound in Theorem \ref{proposition bdp bound} is sharp. Indeed
if $d< m_1+m_2-1$, then the corresponding linear system $\mathcal{L}$ has $h^1(\mathcal{L})>0$, because it
contains the line spanned by the first two points with multiplicity at least two 
in its base locus. This implies that $\reg(Z)\ge m_1+m_2-1$.
\end{remark}

\medskip

In the rest of the section, we will make a comparison between the bounds 
\eqref{segre bound Pn} and  \eqref{bdp bound} in the case of points in general position.

\subsection{Case $s=n+3$.}
One can easily check that if $s=n+3$, the bound \eqref{bdp bound} coincides with Segre's bound \eqref{segre bound Pn}. To see this, let us denote by $\mu,\lambda$ the integers such that 
$\w(Z)=\mu n+\lambda$, with $0\le\lambda \le n-1$. Since in this case $c=1$ (with
$c$  defined as in \eqref{integer c}), one can easily check that 
$\left\lceil (\w(Z)-c)/{n}\right\rceil=\left\lfloor (\w(Z)+n-2)/{n}\right\rfloor$ equals $\mu$ when $\lambda\le1$ and it equals $\mu+1$ when $\lambda\ge2$.

Since $s=n+3$ always lie on a rational normal curve of degree $n$ in $\PP^n$,  Proposition \ref{proposition bdp bound} provides a different proof of (\cite[Proposition 7]{ctv}) in this case.

\subsection{Quasi-homogeneous case.} For a quasi-homogeneous scheme $Z$, 
containing $s-1$ simple points 
and one fat point of weight $d$ in general position, it is easy to see that its regularity index, $\reg(Z)$, 
equals $d$ as long as $s\leq \binom {n-1+d} {d}$, obviously improving 
the bound \eqref{bdp bound}. Indeed, it follows easily that a linear system of the form 
$\ls_{n,d}(d, 1^{s-1})$ has the same dimension  as the linear system $\ls_{n-1,d}(1^{s-1})$.
The bound obtained in  Theorem \ref{proposition bdp bound} for the regularity index 
is $d$.
This improves the bound \eqref{segre bound Pn} 
as soon as $s\geq nd-d+3$. We leave it to the reader to check the details.

\subsection{Case $s\ge n+4$}
Table \ref{table 1} and Table \ref{table 2} contain the comparison between the second terms of the bounds 
\eqref{segre bound Pn} and  \eqref{bdp bound}.

\begin{table}[h]
\begin{center}
\begin{tabular}{|c|c|c|c|c|c|}
\hline
\ & $\lambda=0$ & $\lambda=0$ & $\lambda=1$&
$2\le\lambda\le s-n-2$& $s-n-2<\lambda\le n-1$\\
\ & $s=2n+2$  & $s\le 2n+1$ &\ &
\ & \ \\
\hline
\eqref{segre bound Pn} & $\mu$ & $\mu$ & $\mu$ & $\mu+1$ & $\mu+1$\\ 
\eqref{bdp bound} & $\mu-1$ & $\mu$ &  $\mu$ &$\mu$ & $\mu+1$\\  
\hline 
\end{tabular}\medskip
\end{center}\caption{Comparison table in the case $n+4\le s\le 2n+2$. }\label{table 1}
\end{table}
\begin{table}[h]
\begin{center}
\begin{tabular}{|c|c|c|c|}
\hline
\ & $\lambda=0$  & $\lambda=1$ & $2\le \lambda\le n-1$\\
\hline
\eqref{segre bound Pn} & $\mu$ & $\mu$ & $\mu+1$\\ 
\eqref{bdp bound} & $\mu-1$ & $\mu$ & $\mu$\\ 
\hline
\end{tabular}\medskip
\end{center}\caption{Comparison table in the case $s\ge 2n+3$. }\label{table 2}
\end{table}

\medskip

We conclude this section with a list of examples  in which  Theorem \ref{proposition bdp bound} provides an improvement of the Segre's bound. 
The bounds we present in the examples below are sharp, in other words, the regularity index for the corresponding scheme of fat points is given by \eqref{bdp bound}.

\begin{example}
For the planar case $n=2$, the bound \eqref{bdp bound} improves the bound given by 
Segre \eqref{segre bound P2}. One can easily check this by considering the scheme
 given by six double points for which Segre's bound \eqref{segre bound P2} equals $6$.
 However the linear system of quintic curves $\ls=\ls_{2,5}(2^6)$ has vanishing
 cohomology 
group $H^1(\PP^2,\ls)$, as predicted by our bound \eqref{bdp bound} that is $5$. 
\end{example} 

\begin{example} For a scheme of nine double points in $\PP^{3}$,
Segre's bound \eqref{segre bound Pn} is $6$, but $\ls=\ls_{3,5}(2^9)$ has vanishing 
$H^1(\PP^3, \ls)$, as predicted by the bound \eqref{bdp bound} that is $5$. 
\end{example}

\section{Modification of Segre's conjecture for arbitrary number of points}\label{Conjecture}
In this section we introduce the following conjecture for points in arbitrary position and present the evidences we have for it.

\begin{conjecture}\label{Seg}
Fix integers $d\ge 2$, $n> 2$.  Let $S\subset \PP^n$ be a finite 
collection of $s$ points. Fix integers $m_1,\dots, m_s$ and set 
$Z:= \sum_{i=1}^s  m_ip_i$. Then $h^1(\Ii _Z(d)) =0$ if all the following conditions are satisfied:
\begin{enumerate}
\item $\w(Z) \le nd+1$;
\item $\w_L(Z) \le d+1$ for each line $L\subset \PP^n$;
\item for all integers $r=2,\dots ,n-1$ and every $r$-dimensional linear subspace $L\subset \PP^n$ we have $\w_L(Z)
\le rd+2$; if $\w_L(Z) =rd+2$ we also assume $h^1(L,\Ii _{Z\cap L}(d)) =0$. 
\end{enumerate}
\end{conjecture}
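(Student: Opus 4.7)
The plan is to prove the conjecture by double induction on the dimension $n$ and the degree $d$. The case $n=2$ reduces to Fatabbi's classical result in $\PP^2$, and the base case $d=2$ can be handled by a direct case analysis (together with the Alexander--Hirschowitz classification), since condition (3) severely restricts how many fat points can concentrate in any plane when $d=2$. Throughout the inductive step I fix $n>2$ and $d\ge 3$, assume the statement for all lexicographically smaller pairs $(n',d')$, and aim at $h^1(\Ii_Z(d))=0$.

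The main engine is the Castelnuovo restriction sequence
\[
0 \to \Ii_{\mathrm{Res}_H(Z)}(d-1) \to \Ii_Z(d) \to \Ii_{Z\cap H,H}(d) \to 0
\]
for a carefully chosen hyperplane $H\subset \PP^n$; it then suffices to obtain both $h^1(H,\Ii_{Z\cap H}(d))=0$ and $h^1(\Ii_{\mathrm{Res}_H(Z)}(d-1))=0$. I would select $H$ in two regimes: if some hyperplane $H_0$ satisfies $\w_{H_0}(Z)=(n-1)d+2$, I take $H=H_0$, in which case the vanishing on $H$ is precisely the extra assumption built into condition (3). Otherwise I take $H$ spanned by points of $S$ so that $Z\cap H$ satisfies conditions (1)--(3) in $H\cong \PP^{n-1}$ with the same $d$; these inherited conditions follow from the hypotheses on $\PP^n$, and the conjecture in dimension $n-1$ (the inductive hypothesis on $n$) closes that side.

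The delicate point is to verify that $\mathrm{Res}_H(Z)$ satisfies the hypotheses of the conjecture with $d-1$ in place of $d$. Using $\w(\mathrm{Res}_H(Z))=\w(Z)-\sharp(S\cap H)$ and $\w_L(\mathrm{Res}_H(Z))=\w_L(Z)-\sharp(S\cap L\cap H)$, each of (1)--(3) becomes a combinatorial constraint on how many points of $S$ must lie in $L\cap H$ for various subspaces $L$. Concretely: for every line $L$ with $\w_L(Z)=d+1$, the intersection $L\cap H$ should meet $S$; for every $r$-subspace $L$ with $\w_L(Z)=rd+2$, one needs $\sharp(S\cap L\cap H)\ge r$; and condition (1) asks that $H$ meet $S$ in at least $\w(Z)-(n(d-1)+1)$ points. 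The strategy is to choose $H$ passing through carefully picked elements of $S$ so that it cuts all the ``extremal'' subspaces in enough points, which is possible in most configurations.

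The main obstacle will be the tight configurations in which no hyperplane simultaneously satisfies all these combinatorial constraints. In those cases I would resort to a degeneration in the spirit of Lemma \ref{sssr2}: move a point $p\in S$ to a distinguished position via the one-parameter family $h_\lambda$, use semicontinuity of cohomology to bound $\reg(Z)$ above by the regularity of the flat limit, and verify the conjecture's hypotheses for the specialised scheme (where a suitable hyperplane with the desired incidence is now available). For $n=3$ the bookkeeping collapses because only one intermediate dimension ($r=2$) needs to be tracked, and the problem reduces to a finite case analysis on the distribution of $S$ among the planes and lines of $\PP^3$; this is how I would close the $n=3$ case, while the general case should proceed by the same scheme, with the combinatorics of intermediate subspaces becoming the genuinely hard step.
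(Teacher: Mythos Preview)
The statement you are trying to prove is a \emph{conjecture} in the paper, not a theorem: the authors establish it only for $n=3$ (Proposition~\ref{ccc12}) and leave the general case open. So the right target for comparison is their $n=3$ argument, and there your sketch diverges substantially. The paper does not induct on $d$ at all: it argues by contradiction, invoking the known truth of the generalised Segre bound in $\PP^3$ (Fatabbi--Lorenzini, Thi\^{e}n) to locate a plane $H$ with $\w_H(Z)=2d+2$, uses the built-in hypothesis $h^1(H,\Ii_{Z\cap H}(d))=0$ from condition (3), and then shows that the residual $\mathrm{Res}_H(Z)$ already satisfies the (proved) generalised Segre bound in degree $d-1$, forcing the existence of a second plane $M$ whose weight is incompatible with $\w(Z)\le 3d+1$. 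No degeneration, no induction on $d$, and no need to re-verify the conjecture's own hypotheses for the residual.

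Your inductive scheme, by contrast, has a genuine gap at the step where you assert that ``each of (1)--(3) becomes a combinatorial constraint'' for $\mathrm{Res}_H(Z)$ in degree $d-1$. Condition (3) is not purely combinatorial: when $\w_L(\mathrm{Res}_H(Z))=r(d-1)+2$ it further demands $h^1\bigl(L,\Ii_{\mathrm{Res}_H(Z)\cap L}(d-1)\bigr)=0$, and nothing in the hypotheses on $Z$ gives you this vanishing for the residual scheme. You would need to prove it, and that is essentially a lower-dimensional instance of the very conjecture you are trying to establish, so the induction does not close. (Note also that such an $L$ can arise even when $\w_L(Z)<rd+2$, so your list of ``extremal'' subspaces is incomplete.) A second, smaller issue: your base case $d=2$ appeals to Alexander--Hirschowitz, but that theorem concerns points in \emph{general} position, whereas the conjecture is for arbitrary $S$; the restriction imposed by (3) helps, but it does not reduce you to the generic configuration.
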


Condition (1) (resp. (2)) of Conjecture \ref{Seg} is to make sure that no rational normal curve 
(resp. line), spanned by points of S, is contained in the singular locus of
 $\mathcal{I}_Z(d)$. See also Remark 4.3 below. 
Moreover condition (3) says that if neither does any  higher dimensional 
linear spaces $L$ spanned by points of $S$ (namely $\w_L(Z)\le d+1$), then 
$h^1(\mathcal{I}_Z(d))=0$.
When $\w_L(Z)=d+2$, condition (3) says that if both (1) and (2) are satisfied,
 the non-vanishing $h^1(\mathcal{I}_Z(d))>0$ happens because of  the non-vanishing of the
 first cohomology of the same sheaf restricted to $L$.

\begin{lemma}\label{ss1}
 Assume the existence of a closed subscheme $W\subsetneq Z$
such that $h^1(\Ii _W(d)) >0$. Then $h^1(\Ii _Z(d)) >0$.\end{lemma}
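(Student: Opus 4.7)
The plan is to exploit the inclusion $\mathcal{I}_Z \subseteq \mathcal{I}_W$ of ideal sheaves, which follows from $W \subsetneq Z$. First I would form the short exact sequence
$$0 \to \mathcal{I}_Z(d) \to \mathcal{I}_W(d) \to \mathcal{Q}(d) \to 0,$$
where $\mathcal{Q} := \mathcal{I}_W/\mathcal{I}_Z$ is a coherent sheaf on $\mathbb{P}^n$. Because both $W$ and $Z$ have the same (finite) set-theoretic support, $\mathcal{Q}$ is set-theoretically supported on this 0-dimensional subscheme.

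Next I would observe that since $\mathrm{supp}(\mathcal{Q})$ has dimension zero, we have $h^i(\mathcal{Q}(d)) = 0$ for all $i \ge 1$, independently of the twist $d$. Taking cohomology in the above sequence, the tail reads
$$\cdots \to H^1(\mathcal{I}_Z(d)) \to H^1(\mathcal{I}_W(d)) \to H^1(\mathcal{Q}(d)) = 0,$$
so that $H^1(\mathcal{I}_W(d))$ is a quotient of $H^1(\mathcal{I}_Z(d))$. In particular $h^1(\mathcal{I}_Z(d)) \ge h^1(\mathcal{I}_W(d)) > 0$, which is precisely the statement of the lemma.

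There is no substantive obstacle: the lemma is a monotonicity property of the first cohomology along inclusions of 0-dimensional subschemes. The only point worth checking is that $\mathcal{Q}$ really is supported in dimension zero, which is automatic from the fact that $W$ and $Z$ are both 0-dimensional.
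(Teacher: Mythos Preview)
Your proof is correct and is essentially the paper's argument viewed from the ideal-sheaf side: your quotient $\mathcal{Q}=\mathcal{I}_W/\mathcal{I}_Z$ is exactly the sheaf $\mathcal{I}_{W,Z}$ the paper uses, and both proofs rest on the vanishing of its $H^1$ due to zero-dimensional support (the paper then reads this as surjectivity of $H^0(\mathcal{O}_Z(d))\to H^0(\mathcal{O}_W(d))$, you as surjectivity of $H^1(\mathcal{I}_Z(d))\to H^1(\mathcal{I}_W(d))$). One minor imprecision: $W$ need not have the \emph{same} support as $Z$, but this is harmless since $\mathcal{Q}$ is in any case supported on $\mathrm{supp}(Z)$.
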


\begin{proof}
Since $\dim (Z) =0$, we have $h^1(Z,\Ii _{W,Z}(d)) =0$. Hence
the restriction map $H^0(\mathcal {O}_Z(d)) \to H^0(\mathcal {O}_W(d))$ is surjective. Since $h^1(\Ii _W(d)) >0$, we get
$h^1(\Ii _Z(d)) >0$.
\end{proof}

\begin{remark}\label{ss12}
In the same notation as in Conjecture \ref{Seg},
assume the existence of a closed subscheme $T\subset \PP^n$
such that $h^0(T, \mathcal {O}_T(d)) < \w_T(Z)$. Then $h^1(T,\Ii _{Z\cap T}(d)) >0$. 
Hence $h^1(\Ii _{Z\cap T}(d)) >0$. Lemma \ref{ss1} gives $h^1(\Ii _Z(d))>0$.
In particular to have any chance that $h^1(\Ii _Z(d))=0$, we need $\w_C(Z) \le md+1$ for every rational normal curve of degree $m$ of an $m$-dimensional
linear subspace of $\PP^n$ (we allow the case $m=n$). In particular condition (2) of Conjecture \ref{Seg} is a necessary condition.
\end{remark}

\begin{theorem}\label{cc1}
Fix integers $n\ge 2$, $d\ge 4$, $d\ge m_1+2$, $s\ge 2n+3$, 
$m_1\ge \cdots \ge m_s > 0$, $\sum _i m_i =nd+2$, $m_1+m_{2n+2}\le d$ and $m_1+m_2 \le d+1$.
Let $S = \{p_1,\dots ,p_s\} \subset \PP^n$ be a finite subset of points in linearly general position. Set $Z:= \sum _i m_ip_i$.
We have $h^1(\Ii _Z(d)) =0$ if and only if $S$ is not contained in a rational normal curve of degree $n$ of $\PP^n$.
\end{theorem}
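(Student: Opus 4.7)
My plan treats the two directions of the biconditional separately. The ``only if'' direction follows immediately from Remark \ref{ss12} applied to $T:=C$, where $C$ is the rational normal curve of degree $n$ containing $S$. Since $\mathcal{O}_C(d)\cong \mathcal{O}_{\PP^1}(nd)$ gives $h^0(C,\mathcal{O}_C(d))=nd+1$, while $\w_C(Z)=\w(Z)=nd+2$ (the $m_i$-fat point intersects the smooth curve $C$ at $p_i$ in a scheme of length $m_i$), Remark \ref{ss12} yields $h^1(\Ii_Z(d))>0$.

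For the ``if'' direction, I would argue by induction on $n$ (descending also on $d$, with $d\ge 4$ as base) using the Castelnuovo residual exact sequence
\begin{equation*}
0\to\Ii_{\mathrm{Res}_H(Z)}(d-1)\to\Ii_Z(d)\to\Ii_{Z\cap H,H}(d)\to 0
\end{equation*}
for a judiciously chosen hyperplane $H\subset\PP^n$ spanned by $n$ points of $S$. The restriction $Z\cap H$ is a collection of at most $n$ fat points in linearly general position in $\PP^{n-1}$, and the hypothesis $m_1+m_2\le d+1$ combined with Catalisano--Trung--Valla's bound \eqref{segre bound Pn} gives $h^1(H,\Ii_{Z\cap H,H}(d))=0$. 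The residual $\mathrm{Res}_H(Z)$ in $\PP^n$ at degree $d-1$ has weight $nd+2-\w_H(Z\cap H)\le n(d-1)+2$, with equality exactly when $\w_H(Z\cap H)=n$, i.e.\ when all points of $S\cap H$ have multiplicity one. When the inequality is strict, $h^1(\Ii_{\mathrm{Res}_H(Z)}(d-1))=0$ follows from Theorem \ref{proposition bdp bound} or from the established cases of Conjecture \ref{conjecture gen segre bound}. In the borderline (equality) case, the inductive hypothesis of the present theorem applies at degree $d-1$, provided $H$ can be chosen so that the support of $\mathrm{Res}_H(Z)$ remains not contained in a RNC of degree $n$; this is feasible because a RNC through the reduced support of $\mathrm{Res}_H(Z)$, extended by the removed points, would be a RNC through $S$, contradicting the hypothesis.

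The main obstacle is the combinatorial bookkeeping through the iteration: one must verify that the thresholds $m_1+m_{2n+2}\le d$ and $m_1+m_2\le d+1$ pass to their residual analogues $m_1+m_{2n+2}\le d-1$ and $m_1+m_2\le d$ after subtracting $1$ from the multiplicities of the points on $H$, and that the reindexed multiplicities remain non-increasing and satisfy the ordering constraints. The hypothesis $s\ge 2n+3$ provides enough slack in the index set for this reduction to balance. A secondary obstacle is the base case $n=2$: here $\w(Z)=2d+2$ and Fatabbi's bound \cite{f0} only yields $\reg(Z)\le d+1$, so the improvement by one requires an SHGH-type Cremona reduction centred at three points of $S$, using that the absence of a conic through $S$ with the prescribed multiplicities is precisely what excludes the sole source of speciality at the critical weight $2d+2$.
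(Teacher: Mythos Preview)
Your ``only if'' direction is correct and matches the paper. The ``if'' direction has a genuine gap.

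The residual weight computation is wrong: $\w(\mathrm{Res}_H(Z)) = \w(Z) - \sharp(S\cap H)$, not $\w(Z) - \w_H(Z)$. Since $S$ is in linearly general position and $H$ is spanned by $n$ of its points, exactly $n$ points of $S$ lie on $H$, and the residual weight is \emph{always} $nd+2-n=n(d-1)+2$. Your ``strict inequality'' case never occurs, so the appeal to Theorem~\ref{proposition bdp bound} or to settled cases of Conjecture~\ref{conjecture gen segre bound} is never available; you are always in the borderline case. And there the inductive hypotheses of the theorem at degree $d-1$ need not hold for $\mathrm{Res}_H(Z)$: for instance $d-1 \ge m_1'+2$ fails whenever more than $n$ points carry the maximal multiplicity $m_1$ (you cannot place them all on $H$), and $s' \ge 2n+3$ fails whenever points of multiplicity $1$ on $H$ are annihilated by the residual. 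Your claim that the residual support is not on an RNC likewise presupposes the support is unchanged, which again requires every point on $H$ to have multiplicity $\ge 2$. The base case is also not handled: the proposed Cremona reduction is for $n=2$ only, whereas the induction on $d$ bottoms out at $d=4$ (or $d=m_1+2$) for every $n$.

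The paper's proof uses a different device: a general \emph{quadric} $Q$ through $p_1,\dots,p_{2n+1}$ rather than a hyperplane through $n$ points. The residual $\mathrm{Res}_Q(Z)$ then has weight at most $n(d-2)+1$, so \cite[Theorem~6]{ctv} yields $h^1(\Ii_{\mathrm{Res}_Q(Z)}(d-2))=0$ directly, with no borderline case and no self-referential induction for this step. If $h^1(\Ii_Z(d))>0$, the Castelnuovo sequence then forces all of $S$ into $Q$; since $Q$ was a \emph{general} quadric through $2n+1$ points of $S$, the set $S$ lies in the base locus of $|\Ii_{\{p_1,\dots,p_{2n+1}\}}(2)|$, and the classical lemma \cite[Lemma~3.9]{he} places $S$ on a rational normal curve. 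This link between containment in quadrics and RNCs is the geometric heart of the argument, and it is entirely absent from the hyperplane approach.
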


\begin{proof}
If $S$ is contained in a rational normal curve of $\PP^n$, then $h^1(\Ii _Z(d)) >0$, see \cite[Proposition 7]{ctv}. We use induction on $d$ starting from the case
$d =\max \{4,m_1+2\}$. We check in cases (d), (e) and (f) the starting cases of the induction. In cases (a) and (b) we do not use the inductive assumption, but reduce
the proof to a game with a scheme $W$ with $h^1(\Ii _W(d-2)) =0$ by \cite[Theorem 6]{ctv}.

Assume $h^1(\Ii _Z(d)) >0$. For each $p\in S$ let $m_p$ be the multiplicity of $p$ in $Z$. Let $B_1$ (resp. $B'_1$) be the set of all lines $L\subset \PP^n$ such that $\w_L(Z) =d+1$
(resp. $\w_L(Z)=d$). Since $S$ is in linearly general position, $L\in B_1$ (resp. $B'_1$) if and only if $L$ is a line spanned by two different points, say $p$ and $q$,
with $m_p+m_q = d+1$ (resp. $m_p+m_q=d$). In particular if $B_1\ne \emptyset$, then $m_1+m_2=d+1$. 
If $m_1>m_2$, then the elements of $B_1$ are the lines spanned by $p_1$ and one the points $p_i$, $2\le i \le a$, where $a$ is the maximal integer $i\in \{2,\dots ,s\}$ with $m_i=m_2$.
If $m_1=m_2$, then either $B_1=\emptyset$ (case $d$ even) or $B'_1 =\emptyset$ (case $d$ odd) and $B_1\cup B'_1\ne \emptyset$ if and only if $m_1=m_2 =\lceil d/2\rceil$. In this case
the elements of $B_1\cup B'_1$ are the lines spanned by two points of $\{1,\dots ,a'\}$, where $a'$ is the maximal integer $i\le s$ with $m_i=m_1=m_2$.
We consider the following cases.

\medskip

\quad Case (a):  $m_1=m_2$ and $d$ odd. 

Let $Q\subset \PP^n$ be a general quadric hypersurface containing $\{p_1,\dots, p_{2n+1}\}$. Such a quadric exists, because
$2n+1 < \binom{n+2}{2}$. Set $W:= \mathrm{Res}_Q(Z)$ and $S':= W_{\mathrm{red}}$. For each $p\in S$ let $u_p$ be the multiplicity of $p$ in $W$. If $p= p_i$, set
$u_i:= u_{p_i}$.
We have $u_p =m_p$ if $p\notin Q$, $u_p=m_p-1$ if $p$ is a smooth point of $Q$ and $u_p =\max \{0,m_p-2\}$ if $p$ is a singular point of $Q$. Since $S'\subseteq S$,
$S'$ is in linearly general position. We assume $\sharp (S')\ge n+1$, i.e. that $S'$ spans $\PP^n$; see the proof of case (e) for the case $\sharp (S')\le n$. Since $\sharp (S\cap Q) \ge 2n+1$, we have $\sum _{p\in S'} u_p \le nd+2-2n-1 \le n(d-2)+1$. If $B_1\cup B'_1 =\emptyset$,
then $\w_L(W) \le d-1$ for all lines $L\subset \PP^n$ and hence $h^1(\Ii _W(d-2)) =0$ (\cite[Theorem 6]{ctv}). Now assume $B_1\cup B'_1\ne \emptyset$
and let $e$ be the maximal integer $i\le s$ with $m_i = \lceil d/2\rceil$. Since $\sum _i m_i=nd+2 < (2n+1)\lceil d/2\rceil$, we have $e\le 2n$. Hence if $L\in B_1\cup B'_1$,
then $\w_L(W)\le \w_L(Z)-2 \le d-1$ and hence  $h^1(\Ii _W(d-2)) =0$ (\cite[Theorem 6]{ctv}). Since $h^1(\Ii _Z(d)) >0$, the Castelnuovo residual sequence with respect to $Q$
gives $h^1(Q,\Ii _{Z\cap Q}(d)) >0$. Since the zero-dimensional scheme $Z\cap Q$ is contained in
the scheme $Z':= \sum _{p\in S\cap Q} m_pp$, we get $h^1(\Ii _{Z'}(d)) >0$ (Lemma \ref{ss1}). The support of $Z'$ is in linearly general position and $\w (Z'\cap L)\le \w (Z\cap L)\le d+1$
for all lines $L$. Hence by \cite[Theorem 6]{ctv} we have $\sum _{p\in S\cap Q} m_p\ge nd+2$, i.e. $S\cap Q =S$, i.e. $S$ is in the base locus of $|\Ii _{\{p_1,\dots ,p_{2n+1}\}}(2)|$,
i.e. $|\Ii _{\{p_1,\dots ,p_{2n+1}\}}(2)| =|\Ii _S(2)|$ and $h^0(\Ii _S(2)) =\binom{n+2}{2}-2n-1$.
Since $S$ is in linearly general position,  $h^1(\Ii _A(2)) =0$ for all $A\subset S$ with
$\sharp (A) \le 2n+1$. By \cite[Lemma 3.9]{he}, $S$ is contained in a rational normal curve.

\medskip

\quad Case (b):  $m_1=m_2$ and $d$ even. 

Notice that in this case $B_1=\emptyset$. If $B'_1=\emptyset$, i.e. if $m_1<d/2$, then we make the same construction as in Case (a) with $e=0$.
We still have $\w_L(W) \le d-1$ for all lines $L\subset \PP^n$, because $\w_L(Z)\le d-1$ for every line $L$ and so $h^1(\Ii _W(d-2)) =0$. Now
assume $m_1=d/2$. Let $f$ be the maximal integer $i\le s$ such that $m_i = d/2$. Since $s\ge 2n+3$ and $1+ (2n+2)d/2 > nd +2$, we have $f\le 2n+1$. We may repeat the proof of Case (a), because
$\w_L(W) \le d-1$ holds. 

\medskip

\quad Case (c):  $m_1>m_2$ and $d>4$.

 If $m_1+m_2 \le d-1$ and $d\ge 6$, then we may repeat the proof of Case (a), because $\w_L(W)\le d-1$
for all lines $L$ (see the details in case (e)). Hence we may assume $m_1+m_2\ge d$. Let $x$ be the maximal integer $i\le s$ such that $m_i=m_2 =d+1-m_1$ (with
the convention $x=0$ if there is no such an integer, i.e. if $m_2=d-m_1$) and let $g$ be the maximal integer $i\le s$
such that $m_i=d-m_1$ with the convention $g =x$ if there is no such an integer. If $g\le 2n+1$, then we may repeat the proof of Case (a), because $\w_L(W)\le d-1$
for all lines $L$. Hence we may assume $g\ge 2n+2$.  Let $H$ be the hyperplane spanned by
$p_1,\dots ,p_n$. Set $U:= \mathrm{Res}_H(Z)$ and let $S_1:= U_{\mathrm{red}}$ be the support of $U$. For each $p_i\in S$ let $r_i$ be the multiplicity of $p_i$ in $U$.
We have $r_i=m_i-1$ if $i\le n$ and $r_i=m_i$ if $i>n$. Since each element of $B_1$ contains $p_1$, we have $\w_L(U)\le d$ for all lines $L$. We have $\sum _i r_i =n(d-1)+2$.  
Since $d\geq m_1+2$ and $g\geq 2n+2$, then $m_i\geq 2$ for
 all $i\leq g$, in particular for all $i\leq n$. Hence $S_1=S$ and $\sharp (S_1)\ge 2n+3$. We order the sequence $r_1,\dots ,r_s$ in non-decreasing order $y_1\ge \cdots \ge y_s$. We have $y_1=r_1=m_1-1$. Since $d\ge m_1+2$, $m_1>m_2$, $r_1=m_1-1$
and $y_i\le m_i$ for all $i$,
we have $d-1\ge y_1+2$. We have $y_1 +y_{2n+2} \le d-1$, because $y_{2n+2} \le m_{2n+2}$ and
$y_1 =m_1-1$. Hence by the inductive assumption there is a rational normal curve $C\supset S_1 =S$.

\medskip 

\quad Case (d): Assume
$d=4$ and so $m_1 \le 2$. Let $f$ be the maximal integer $i$ such that $m_i =2$ with the convention $f=0$ if $m_1=1$. Since $\w (Z) =4n+2$, we have $f\le 2n+1$. Let $Q$ be a general
quadric hypersurface containing $\{p_1,\dots ,p_{2n+1}\}$. Set $W:= \mathrm{Res} _Q(Z)$. For each $p\in S$ set $m'_p= m_p$ if $p\notin Q$ and $m'_p =m_p-1$ if $p\in Q$.
Set $Z' = \sum _{p\in S} m'_pp$. Since $f\le 2n+1$, $W$ is a reduced scheme with cardinality $\le 2n+1$ and in linearly general position. Therefore
$h^1(\Ii _W(2)) =0$. The Castelnuovo's sequence of $Q$ gives $h^1(Q,\Ii _{Z\cap Q}(4)) >0$. Since $Z'\cap Q =Z\cap Q$, Lemma \ref{ss1} gives
$h^1(\Ii _{Z'}(4)) >0$. Since $Z'_{\mathrm{red}} \subseteq S$ is in linearly general position, \cite[Theorem 6]{ctv} gives $\w (Z') >4n+1$, i.e. $Z' =Z$.
Hence $Q\supset S$. Since a general quadric hypersurface containing $\{p_1,\dots ,p_{2n+1}\}$ contains $S$, $S$ is in linearly general position and
$\sharp (S)\ge 2n+3$, then $S$ is contained in a rational normal curve (\cite[Lemma 3.9]{he}).  

\medskip 

\quad Case (e) Assume $d=5$ and hence $m_1\le 3$. Let $h$ be the maximal integer $i$ such that $m_i \ge 3$. Since $\w (Z)=5d+2$, we have $h\le 2n+1$. Take $Q$ and $W =\mathrm{Res} _Q(Z)$ as in Case (e). We have
$\w(W) \le 3n+1$ and $W_{\mathrm{red}}$ is in linearly general position. Call $u_1 \ge u_2 \ge \cdots $ the multiplicities in $W$ of the points
of $W_{\mathrm{red}}$. Since $h\le 2n+1$, we $u_1\le 2$ and hence $u_1+u_2 \le 4$. If $W_{\mathrm{red}}$ spans $\PP^n$,
then \cite[Theorem 6]{ctv} gives $h^1(\Ii _W(3)) =0$ and we conclude as in Case (d). Now assume that $W_{\mathrm{red}}$ spans a linear subspace $M$ of dimension $r<n$.
Write $W_{\mathrm{red}} =\{q_1,\dots ,q_{r+1}\}$ with $q_i$ appearing with multiplicity $u_i$ in $W$. 
By \cite[Theorem 6]{ctv} we have $h^1(M,\Ii _{W\cap M}(3)) =0$. Take a linear subspace $M\subset N\subseteq \PP^n$ with $\dim (N)=r+1$. See
$M$ as a hyperplane of $N$ to compute the residual scheme $\mathrm{Res} _M(W\cap N)$ of the scheme $W\cap N\subset N$.
Each $q_i$ occurs in $\mathrm{Res} _N(W\cap N)$ with multiplicity $u_i-1\le 1$. Hence $h^1(N,\Ii _{\mathrm{Res} _M(N\cap Z)}(1)) =0$ and so
$h^1(N,\Ii _{\mathrm{Res} _M(N\cap Z)}(2)) =0$. 
The Castelnuovo's sequence of $M$ gives $h^1(N,\Ii _{N\cap W}(3)) =0$. If $N=\PP^n$, 
then $h^1(\Ii _W(3)) =0$.
If $r+1 <n$ we take a flag of linear subspaces $N\subset N_1\subset \cdots \subset N_{n-r-1}=\PP^n$ with $\dim (N_i) =r+1-i$ for all $i$. After $n-r-1$ steps
we get $h^1(\Ii _W(3))=0$. Then we conclude as in Case (d)

\medskip 

\quad Case (f) Assume $d=m_1+2 \geq 6$. Since $m_1+m_2\le d+1$, we have $m_2\le 3$. Let $g$ be the maximal integer $i$ such that $m_g \ge 3$. Since
$m _{2n+2} +m_1\le d$, we have $g\le 2n+1$. Take $Q$ and $W$ as in Case (e). As in Case (a) or (d) it is sufficient to prove that $h^1(\Ii _W(d-2)) =0$.
Call $u_1\ge u_2\ge\cdots \ge $ the multiplicities in $W$ of the points of $W_{\mathrm{red}}$. Since $g\le 2n+1$, we have $u_2\le 2$. Since $u_1 \le m_1-1 \le d-3$
and $W_{\mathrm{red}}$ is in linearly general position, we have $h^1(\Ii _W(d-2)) =0$ and we conclude as in the last two cases.
\end{proof}

\subsection{Conjecture \ref{Seg} holds for $n=3$}

In this section we prove that Conjecture \ref{Seg} holds for $n=3$.

\begin{proposition}\label{ccc12}
Conjecture \ref{Seg} is true if $n=3$.
\end{proposition}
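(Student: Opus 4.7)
My plan is to prove Conjecture \ref{Seg} for $n = 3$ by induction on $d$, with the base case $d = 2$ handled separately by combining Theorem \ref{segre for arbitrary n+3} with the Alexander--Hirschowitz classification of special double-point systems in $\PP^3$ (together with a direct check of the boundary cases forced by conditions (1)--(3)). For the inductive step I fix $d \ge 3$ and assume the statement holds in $\PP^3$ at degree $d - 1$.

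The workhorse is the Castelnuovo residual sequence
\[
0 \to \Ii_W(d - 1) \to \Ii_Z(d) \to \Ii_{Z \cap H, H}(d) \to 0, \qquad W := \mathrm{Res}_H(Z),
\]
for a plane $H \subset \PP^3$ carefully chosen to contain as many ``critical'' subspaces of $Z$ as possible---lines $L$ with $\w_L(Z) = d + 1$ and planes $M$ with $\w_M(Z) = 2d + 2$. The vanishing $h^1(\Ii_Z(d)) = 0$ will follow from (i) $h^1(H, \Ii_{Z \cap H, H}(d)) = 0$ and (ii) $h^1(\PP^3, \Ii_W(d - 1)) = 0$. Statement (i) holds in the non-critical range $\w_H(Z) \le 2d + 1$ by Fatabbi's planar bound \cite{f0}, since the hypothesis $\w_L(Z) \le d+1$ on lines is inherited on $H$ and $\lfloor \w_H(Z)/2\rfloor \le d$; in the critical case $\w_H(Z) = 2d + 2$ it is given directly by hypothesis (3). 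Statement (ii) will come from the inductive hypothesis, provided $W$ satisfies conditions (1')--(3') at degree $d - 1$.

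The heart of the argument, and the main obstacle, is checking that $W$ inherits the hypotheses. Condition (1'), $\w(W) = \w(Z) - \sharp(S \cap H) \le 3d - 2$, holds as soon as $\sharp(S \cap H) \ge 3$, which is easy to arrange (the degenerate cases $\sharp(S)\le 2$ being trivial). Condition (2'), $\w_L(W) \le d$ on every line $L$, is automatic from (2) when $L \subset H$, but for $L \not\subset H$ with $\w_L(Z) = d + 1$ it forces $L \cap H \in S$, so $H$ must contain every critical line. Condition (3') is handled by a similar bookkeeping of how $\sharp(S\cap M\cap H)$ absorbs weight for each plane $M$.

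The truly delicate configuration is when $Z$ admits two skew critical lines $L_0, L_1$, since no plane contains both and the plane-based restriction cannot simultaneously defuse both obstructions. In that case I would abandon the plane and instead run the Castelnuovo sequence with respect to a smooth quadric surface $Q \supset L_0 \cup L_1$: the residual $\mathrm{Res}_Q(Z)$ then has strictly smaller weight and is controlled by a secondary induction on $\w(Z)$, while the vanishing on $Q \cong \PP^1 \times \PP^1$ is reduced to a bidegree computation of line bundles on the two rulings, for which hypothesis (2) applied to the rulings of $Q$ gives precisely the needed control. I expect the quadric step to be the most technical part of the proof.
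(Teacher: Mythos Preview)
Your inductive scheme misses the main shortcut and has a real gap in the verification of condition (3') for the residual.

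The paper does not induct on $d$. It invokes the generalised Segre bound in $\PP^3$, which is already a theorem (Fatabbi--Lorenzini \cite{fl}, Thi\^{e}n \cite{t}), as a black box, twice. Arguing by contradiction: if $h^1(\Ii_Z(d))>0$, then since (1) and (2) already place lines and $\PP^3$ within the Segre inequalities, that theorem forces the existence of a plane $H$ with $\w_H(Z)=2d+2$. Condition (3) gives $h^1(H,\Ii_{Z\cap H,H}(d))=0$, so the residual sequence yields $h^1(\Ii_W(d-1))>0$ for $W=\mathrm{Res}_H(Z)$. One checks $\sharp(S\cap H)\ge 4$, hence $\w(W)\le 3(d-1)+1$, and that any line $L$ with $\w_L(W)=d+1$ would force $\w(Z)\ge\w_H(Z)+\w_L(Z)=3d+3$, contradicting (1); so $\w_L(W)\le d$ for every line. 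A \emph{second} application of the generalised Segre bound, now to $W$, produces a plane $M$ with $\w_M(W)\ge 2d$, and a short weight count gives $3d+1\ge\w(Z)\ge\w_H(Z)+\w_M(W)-\w_{H\cap M}(W)\ge 3d+2$, the final contradiction. No induction, no quadrics.

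Your plan, by contrast, needs $W$ to satisfy the full hypotheses of Conjecture~\ref{Seg} at degree $d-1$. Condition (3') has two clauses: the numerical bound $\w_M(W)\le 2d$ and, in the equality case, the cohomological vanishing $h^1(M,\Ii_{W\cap M,M}(d-1))=0$. Your ``bookkeeping'' can address only the first; the second is not a weight statement and does not follow from the hypotheses on $Z$ (Fatabbi's planar bound applied to $W\cap M$ with $\w_M(W)=2d$ gives only $\reg(W\cap M)\le d$, not $d-1$). To avoid this you would need the strict inequality $\w_M(W)\le 2d-1$, which the paper's count does deliver---but only after taking $H$ to be a \emph{critical} plane, whose existence is supplied precisely by the theorem of \cite{fl,t} that you never invoke. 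When $Z$ has no critical plane your induction has no obvious way to proceed, whereas that theorem finishes the case in one line.

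The skew-lines scenario is a red herring: once $H$ is critical, the single inequality $\w(Z)\le 3d+1$ forces every critical line $L$ to meet $S\cap H$ (else $\w(Z)\ge\w_H(Z)+\w_L(Z)=3d+3$), so one plane already defuses all of them and the quadric is unnecessary. Your base case $d=2$ is also underspecified: Theorem~\ref{segre for arbitrary n+3} treats only $n+3$ points and Alexander--Hirschowitz only double points in general position, whereas condition (1) allows, e.g., seven arbitrary simple points.
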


\begin{proof}
Let $d$ be an integer and $Z$ a fat point scheme for which  conditions (1), (2) and (3) 
of Conjecture \ref{Seg} are satisfied, but $h^1(\mathcal{I}_Z(d))>0$.
Set $S := Z_{\mathrm{red}}$.  By \cite{fl} or \cite{t} Segre's conjecture holds in $\PP^3$. Hence we may assume the existence of a plane $H\subset \PP^3$ such that
$\w_H(Z) =2d+2$.  By assumption we have $h^1(H,\Ii _{Z\cap H,H}(d)) =0$.
 Hence the residual sequence of $H$ and $Z$ gives $h^1(\Ii _{\mathrm{Res}_H(Z)}(d-1)) >0$.
Since $\w _L(Z) \le d+1$ for every line and $\w_H(Z) =2d+2$, 
we have $\sharp (H\cap S) \ge 4$. Hence $\w (\mathrm{Res}_H(Z)) \le 3(d-1)+1$.
Fix a line $L\subset \PP^3$. Since $\w _L(Z)\le d+1$, we have 
$\w _L(\mathrm{Res}_H(Z))\le d+1$ with equality if and only if $\w _L(Z) = d+1$ and
 $S\cap L\cap H=\emptyset$.
If $\w _L(Z) = d+1$ and $S\cap L\cap H=\emptyset$, then 
$\w (Z)\ge \w_H(Z)+\w_L(Z)=3d+3$, a contradiction. 
Therefore $\mathrm{Res}_H(Z)$ satisfies the Segre condition 
(Conjecture \ref{conjecture gen segre bound}) with respect
to lines, i.e. $\w_L(\mathrm{Res}_H(Z))\le d$ for all lines $L\subset\PP^3$.
 Since Segre condition is true even in degree $1$ by \cite{fl} or \cite{t}, 
the fact that $h^1(\Ii _{\mathrm{Res}_H(Z)}(d-1)) >0$
implies  the existence of a plane $M\subset \PP^2$ such
that $\w _M(\mathrm{Res}_H(Z))  \ge 2(d-1)+1=2d-1$.

If $\sharp (S\cap H\cap M)\ge 4$, then we get 
$\w _M(Z) \ge\w _M(\mathrm{Res}_H(Z))+4\ge 2d+3$, which is in contradiction with 
assumption (3). 
Therefore $\sharp (S\cap H\cap M)\le 3$. 
Notice that in this case, since $\sharp (S\cap H)\ge 4$, we have $M\ne H$. Since $M\cap H$ is a line, we have
$\w_{H\cap M}(\mathrm{Res}_H(Z)) \le d$. We have
$3d+1 \ge \w (Z) \ge \w _H(Z) +\w _M(\mathrm{Res}_H(Z)) -\w _{H\cap M}(\mathrm{Res}_H(Z))\ge
2d+2+2d-d.$
This gives a contradiction.

\end{proof}

For a specific fat point scheme $Z$, a good starting point in order to understand
 the value of $h^1(\Ii _Z(d))$ is to consider  linear subspaces $V$ such that  $h^1(V,\Ii _{Z\cap V,V}(d)) =0$
and $\w _V(Z) \gg \dim (V)\cdot d$. 
Then one should look at the residual exact sequences with respect 
to hyperplanes or hyperquadrics $T$ containing $V$ and for which 
$\w_T(Z)$ is large.

\subsection{Achieving Segre's bound}
A finite subset $A\subset \PP^n$ is said to be in \emph{uniform position} or that it has the \emph{uniform position property} if any two subsets of $A$ with the
the same cardinality have the same Hilbert function. We recall that \cite[Theorem 6]{ctv} proves the Segre's  bound \eqref{segre bound Pn} for points in linearly general position 
$$\reg(Z)\leq \max\left\{m_1+m_2-1,\left\lfloor \frac{\w(Z)+n-2}{n}\right\rfloor\right\}.$$

Moreover, \cite[Proposition 7]{ctv} shows that equality holds if the set $\{p_1,\dots ,p_s\}$ is contained in a rational normal curve of $\mathbb {P}^n$. In \cite[Problem 1]{ctv}, the authors asked if the equality in \eqref{segre bound Pn} implies that $\{p_1,\dots ,p_s\}$ is contained in a rational normal curve of $\mathbb {P}^n$.
 It was shown to be true in \cite{c} and \cite[Theorem 2.1]{tv} for points in uniform position (at least if $m_{2n+3}\ge n$). We will show that this is not always the case by exhibiting the following family of examples.

\begin{example}\label{a1}
Fix positive integers $n\ge 4$, $m_i>0$ and $d\ge 4$ such that $m_1+\cdots +m_s = nd + \alpha$ with $1 \le \alpha \le n-1$.
Assume the existence of an integer $b\in \{s-\alpha, \dots ,s-1\}$ such that
$\sum _{i=b+1}^{s} m_i \le \alpha$; for instance, take $b=n-\alpha$ and $m_i=1$ for all $i>b$. Let $C\subset \mathbb {P}^n$ be a rational normal curve of degree $n$.
Take $p_i\in C$, $1\le i \le b$, distinct and $p_j\in \mathbb {P}^n\setminus C$,
$b+1 \le j \le s$, with the only restriction that the set $\{p_1,\dots ,p_s\}$ is in linearly general position (e.g. we take $p_j$ general for $j>b$).
Set $Z':= \sum _{i=1}^{b} m_ip_i$. By \cite[Proposition 7]{ctv}, we have
$\reg(Z') =t$, where $t$ is the Segre's bound. By \cite[Proposition 5]{ctv}, we have $\reg(Z) \le t$. Lemma \ref{ss1} implies that $\reg(Z') \le \reg(Z)$.
Hence $\reg(Z)=t$. If $s-b \ge n+3$, then $C$ is the only rational normal curve containing
$p_1,\dots ,p_b$. Hence $\{p_1,\dots ,p_s\}$ is contained in no rational normal curve.
\end{example}

Fix any positive integer $x$. We say that $A$ is in \emph{uniform position in degree $\le x$} if for any $E, F\subset A$
with  $\sharp (E) =\sharp (F)$ we have $h_E(t) =h_F(t)$ for $t=1,\dots ,x$. Uniform position in degree $\le 1$ is equivalent to linearly general position.

The proof of \cite[Theorem 2.1]{tv} works verbatim just assuming that the set has  uniform position in degree $\le 2$. Trung and Valla gave another result in which their conjecture
is true and with the set only assumed to be in linearly general position (\cite[Theorem 1.6]{tv}). So we do not see a natural way to improve Theorem \ref{cc1} to the case
$\w(Z) >nd+2$, nor to weaken the assumption $m_1+m_{2n+2}\le d$.

In the following example the set $S$ is in uniform position.

\begin{example}
Fix integers $d\ge 2$ and $n\ge 3$ and set $s:= (n-1)d+3$. Fix a point $p\in \mathbb {P}^n$, a hyperplane $H\subset \mathbb {P}^n$ such that $p\notin H$
and a rational normal curve $C\subset H$ of degree $n-1$. 
Let $T\subset \mathbb {P}^n$ be the cone with vertex $p$ over $C$. Fix a general
$S'\subset T$ with $\sharp (S') =s-1$ and set $S:= \{p\}\cup S'$. Set $m_p:= d$ and 
$m_q=1$ for all $q\in S'$. Write $Z:= \sum _{q\in S} m_qq$.
Let $\ell : \mathbb {P}^n\setminus \{p\} \to H$ denote the linear projection from $p$ onto $H$. 
Set $A:= \ell (S')$. We have $\sharp (A) = (n-1)d+2$. Since $A$
is contained in a rational normal curve of $H$, we have $h^1(H,\mathcal {I} _A(d)) =1$. The linear system $|\mathcal {I} _{dp}(d)|$ is the set of all degree $d$ cones
with vertex containing $p$. Hence $h^0(\mathcal {I} _Z(d)) = h^0(H,\mathcal {I} _A(d))$ and
 $h^1(\mathcal {I} _Z(d)) =1$. Since $s \ge n+4$ and $S'$ is general in $T$,
$S$ is not contained in a rational normal curve of degree $n$ of $\mathbb {P}^n$. 
We claim that $S$ is in uniform position. To see this, fix an integer $k\ge 0$. Since $T$ is an irreducible
variety and $\mathcal {O} _{\mathbb {P}^n}(k)$ has no base points, for a general finite set $E\subset T$ we
have $h^0(\mathcal {I} _E(k)) = \max \{h^0(\mathcal {I} _T(k)), \binom{n+k}{n}-\sharp (E)\}$ and $h^0(\mathcal {I} _{E\cup \{p\}}(k)) = \max \{h^0(\mathcal {I} _T(k)) , \binom{n+k}{n}-\sharp (E)-1\}$. Since $S'$ is general in $T$, any two subsets of $S$ with the same cardinality have the same Hilbert function.
\end{example}

\begin{appendix}

\section{Macaulay 2 Codes}
\label{app}
\setcounter{section}{1}
In this section we provide Macaulay 2 \cite{macaulay} scripts for the computations  that cover the initial cases of the proofs of the results contained in Section \ref{generalised segre's bound}.

\subsection{Code 1}\label{app1}

\begin{verbatim}
KK=ZZ/32749;
R=KK[e_0..e_4]

d=5; 
N=binomial(d+4,4);

f=ideal(e_0..e_4);
fd=f^d;
T=gens gb(fd)
J=jacobian(T); 
JJ=jacobian(J); 

p0=matrix{{e_0^0,0,0,0,0}}; 
p1=matrix{{0,e_0^0,0,0,0}}; 
p2=matrix{{0,0,e_0^0,0,0}}; 
p3=matrix{{0,0,0,e_0^0,0}};
p4=matrix{{e_0^0,e_0^0,e_0^0,e_0^0,0}};  
p5=matrix{{0,0,0,0,e_0^0}}; 
p6=matrix{{e_0^0,e_0^0,-e_0^0,0,e_0^0}};  

mat=random(R^1,R^N)*0;
mat=(mat||sub(JJ,p0));
mat=(mat||sub(JJ,p1));
mat=(mat||sub(JJ,p2));
mat=(mat||sub(JJ,p3));
mat=(mat||sub(JJ,p4));
mat=(mat||sub(JJ,p5));
mat=(mat||sub(JJ,p6));

r=rank mat;
exprank=7*binomial(6,4); 

print(r,exprank) 
\end{verbatim}

\subsection{Code 2}\label{app2}
\begin{verbatim}
KK=ZZ/32749;
R=KK[e_0..e_5]

d=3; 
N=binomial(d+5,5);

f=ideal(e_0..e_5);
fd=f^d;
T=gens gb(fd)
J=jacobian(T);

p0=matrix{{e_0^0,0,0,0,0,0}}; 
p1=matrix{{0,e_0^0,0,0,0,0}}; 
p2=matrix{{0,0,e_0^0,0,0,0}}; 
p3=matrix{{0,0,0,e_0^0,0,0}}; 
p4=matrix{{0,0,0,0,e_0^0,0}};
p5=matrix{{0,0,0,0,0,e_0^0}};
p6=matrix{{e_0^0,e_0^0,e_0^0,e_0^0,0,0}};  
p7=matrix{{e_0^0,e_0^0,0,0,e_0^0,e_0^0}};  

mat=random(R^1,R^N)*0;
mat=(mat||sub(J,p0));
mat=(mat||sub(J,p1));
mat=(mat||sub(J,p2));
mat=(mat||sub(J,p3));
mat=(mat||sub(J,p4));
mat=(mat||sub(J,p5));
mat=(mat||sub(J,p6));
mat=(mat||sub(J,p7));

r=rank mat;
exprank=8*6; 

print(r,exprank) 
\end{verbatim}
\end{appendix}

\providecommand{\bysame}{\leavevmode\hbox to3em{\hrulefill}\thinspace}

\end{document}